\newtheorem{theorem}{Theorem}[section]
\newtheorem{lemma}[theorem]{Lemma}
\newtheorem{propos}[theorem]{Proposition}
\theoremstyle{definition}
\newtheorem{defin}[theorem]{Definition}
\numberwithin{equation}{section}
\newcommand{\dive}{{\rm{div}}}
\newcommand{\loc}{\rm{loc}}
\newcommand{\La}{\mathcal{L}}
\newcommand{\p}{\partial} 
\newcommand{\IR}{\mathbb{R}}
\newcommand{\irn}{\IR^N}
\newcommand{\wbar}{\widebar}
\newcommand{\lrn}{L^1(\irn)}
\newcommand{\lprn}{L^p(\irn)}
\newcommand{\IS}{\mathcal{S}}
\newcommand{\U}{\mathcal{U}}
\newcommand{\Z}{\mathcal{Z}}
\newcommand{\Riesz}{\mathcal{R}}
\newcommand{\curr}{{J}}
\newcommand{\irnxv}{\mathbb{R}^N_x \times \mathbb{R}^N_v}
\newcommand{\1}{\mathbbm{1}}
\begin{document}
\title[Lagrangian solutions for Vlasov-Poisson with $L^1$ density]{Lagrangian solutions to the Vlasov-Poisson \\ system with $L^1$ density}

\author{Anna Bohun}
\address{Anna Bohun, Departement Mathematik und Informatik, Universit\"at Basel, 
Rheinsprung 21, CH-4051, Basel, Switzerland}
\email{anna.bohun@unibas.ch}
\author{Fran\c{c}ois Bouchut}
\address{Universit\'e Paris-Est, Laboratoire d'Analyse et de Math\'ematiques Appliqu\'ees (UMR 8050),
CNRS, UPEM, UPEC, F-77454, Marne-la-Vall\'ee, France}
\email{francois.bouchut@u-pem.fr} 
\author{Gianluca Crippa}
\address{Gianluca Crippa, Departement Mathematik und Informatik, Universit\"at Basel, 
Rheinsprung 21, CH-4051, Basel, Switzerland}
\email{gianluca.crippa@unibas.ch}

\begin{abstract} The recently developed theory of Lagrangian flows for transport equations
with low regularity coefficients enables to consider non BV vector fields. We apply this theory to
prove existence and stability of global Lagrangian solutions to the repulsive
Vlasov-Poisson system with only integrable initial distribution function with finite energy.
These solutions have a well-defined Lagrangian flow.
An a priori estimate on the smallness of the superlevels of the flow in three dimensions is established
in order to control the characteristics.
\end{abstract}

\keywords{Vlasov-Poisson system, Lagrangian flows, non BV vector fields, superlevels, weakly convergent initial data}

\maketitle

\section{Introduction}

We consider the Cauchy problem for the classical Vlasov-Poisson system
\begin{equation}\label{vlasov}
\p_t f+v\cdot \nabla_x f +E \cdot \nabla_v f=0,
\end{equation}
\begin{equation}
	f(0,x,v)=f^0(x,v) \,,
	\label{eq:vlasovinit}
\end{equation}
where $f(t,x,v)\geq 0$ is the distribution function, $t\geq 0$, $x,v\in\IR^N$, and
\begin{equation}
	E(t,x)=- \nabla_x U(t,x)
	\label{eq:fieldpot}
\end{equation}
is the force field. The potential $U$ satisfies the Poisson equation
\begin{equation}\label{potential}
-\Delta_x U=  \omega(\rho(t,x)-\rho_b(x)),
\end{equation}
with $\omega=+1$ for the electrostatic (repulsive) case, $\omega=-1$ for the gravitational (attractive) case,
and where the density $\rho$ of particles is defined through
\begin{equation}
\rho(t,x)=  \int_{\IR ^N} f(t,x,v)dv,
	\label{eq:defrho}
\end{equation}
and $\rho_b\geq 0$, $\rho_b\in L^1(\irn)$ is an autonomous background density.
Since we are in the whole space, the relation \eqref{eq:fieldpot} together with the Poisson equation \eqref{potential}
yield the equivalent relation
\begin{equation}\label{field}
E(t,x)=\frac{\omega}{|S^{N-1}|}\frac{x}{|x|^N}*(\rho(t,x)-\rho_b(x)),
\end{equation}
where the convolution is in the space variable.

The Vlasov-Poisson system has been studied for long.
Existence of local in time smooth solutions in dimension $N=3$ has been obtained
in \cite{horst} after the results of \cite{Ba}. Global smooth solutions have been
proved to exist in \cite{pfaff} (and simultaneously in \cite{LP} with a different method),
with improvements on the growth in time in \cite{schaeff,schaeff2}.
These solutions need a sufficiently smooth initial datum $f^0$.
In particular, the following theorem is a classical result due to Pfaffelmoser \cite{pfaff}.
\begin{theorem}
If $N=3$, let $f^0$ be a non-negative $C^1$ function of compact support defined on $\IR^6$.
Then there are a non-negative $f\in C^1(\IR^7)$ and $U\in C^2(\IR^4)$, which tends to zero at infinity
for every fixed $t$, satisfying equations \eqref{vlasov}-\eqref{eq:defrho} with $\rho_b=0$.
For each fixed $t$, the function $f(t,x,v)$ has compact support. The solution is determined uniquely
by the initial datum $f^0$.
\end{theorem}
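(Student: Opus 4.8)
The plan is to follow the classical characteristics scheme, in which global existence is reduced to a single a priori bound on the size of the velocity support, and that bound is then obtained by the Pfaffelmoser argument. \emph{Step 1 (local theory and continuation criterion).} Writing the system along characteristics $\dot X=V$, $\dot V=E(t,X)$, one sets up a fixed point: given $E$, push $f^0$ forward along its flow to define $f$, set $\rho=\int f\,dv$, and recover $E$ from $\rho$ via the Newtonian kernel \eqref{field}. For $f^0\in C^1_c$ this has a unique local-in-time solution on a maximal interval $[0,T^*)$, with $f(t)\in C^1_c$ obtained by differentiating along characteristics and hence $\rho(t)\in C^1_c$, $U(t)\in C^2$ by elliptic regularity. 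Along the way one records the conserved quantities: the mass and all $L^p$ norms of $f$ (transport by a field that is divergence free in $(x,v)$), and the energy $\tfrac12\iint|v|^2 f\,dx\,dv+\tfrac12\int|E|^2\,dx$ (for the repulsive case $\omega=+1$ this is a sum of nonnegative terms; for the attractive case compact support plus interpolation still controls it). The continuation criterion is that $T^*$ can be finite only if the velocity support blows up, because if
\[
P(T):=\sup\{|v|\,:\,f(s,x,v)\neq 0,\ 0\le s\le T,\ x\in\IR^3\}
\]
is finite then $\|\rho(t)\|_\infty\le C\|f^0\|_\infty P(t)^3$, so the interpolation estimate $\|E(t)\|_\infty\le C\|\rho(t)\|_\infty^{2/3}\|\rho(t)\|_1^{1/3}\lesssim P(t)^2$ for the Newtonian potential shows the field stays bounded, and a Gronwall/barrier argument prevents the characteristics from escaping.

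\emph{Step 2 (the Pfaffelmoser a priori bound).} The heart of the proof is to show $P(T)\le C_T<\infty$, indeed $P(T)\lesssim(1+T)^k$ for a universal $k$. Note that the crude bound $\dot{|V|}\le\|E\|_\infty\lesssim P^2$ from Step 1 does \emph{not} close. Instead, fix a characteristic $s\mapsto(X(s),V(s))$ issued from $\mathrm{supp}\,f^0$ and estimate $|V(t_2)-V(t_1)|\le\int_{t_1}^{t_2}|E(s,X(s))|\,ds$ on a short subinterval of length $\Delta\sim P(T)^{-\beta}$ chosen later. Expressing $\rho$ as a push-forward, one splits the contribution of the other particles $(y,w)$ to $E(s,X(s))=\tfrac{1}{4\pi}\int\frac{X(s)-y}{|X(s)-y|^3}\rho(s,y)\,dy$ into three regions according to their present and recent distance from $X$: a \emph{good} set of particles that stay far in space (controlled by the pointwise bound $|E|\lesssim\|\rho\|_\infty^{2/3}\|\rho\|_1^{1/3}$ on the complementary small ball together with a crude tail bound); a \emph{bad} set of particles close in space but with relative velocity above a threshold, so their trajectory separates from $X$ ballistically and the singularity $|X(s)-Y(s)|^{-2}$ becomes integrable in $s$; and an \emph{ugly} set of particles close in both position and velocity, whose phase-space measure is small because $\|f\|_\infty=\|f^0\|_\infty$ and $\|f\|_1=\|f^0\|_1$. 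Optimizing the two thresholds and $\Delta$ against $P(T)$ produces, over each subinterval, an estimate $\int_{t_1}^{t_2}|E(s,X(s))|\,ds\le C\,P(T)^{1-\delta}$ with $\delta>0$; summing over the $O((1+T)P(T)^{\beta})$ subintervals and taking the supremum over characteristics yields a closed inequality $P(T)\le C(1+T)P(T)^{1-\delta'}$, hence a polynomial bound. This good/bad/ugly decomposition, together with the bookkeeping that makes the exponents strictly subcritical, is by far the main obstacle; the rest of the argument is comparatively soft.

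\emph{Step 3 (conclusion).} With $P(T)$ bounded on $[0,T]$ the field $E$ is bounded there, so the characteristics remain in a fixed compact set and $f(t,\cdot,\cdot)$ has compact support for each $t$; then $\rho(t)$ is bounded with compact support, $E(t)$ is log-Lipschitz in $x$ (indeed $C^{0,\alpha}$), and a final bootstrap using $f\in C^1$ upgrades this to $\rho\in C^1_c$, $U\in C^2$ with $U(t,\cdot)\to 0$ at infinity, and $f\in C^1(\IR^7)$. Uniqueness within the class of $C^1$ compactly supported solutions follows from a Gronwall estimate on the difference of the two characteristic flows, using the log-Lipschitz (Osgood) modulus of $E$ in $x$ uniformly on compact time intervals to control the separation of characteristics and hence of the corresponding densities and fields. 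Combining Steps 1--3 gives the stated global existence, regularity and uniqueness.
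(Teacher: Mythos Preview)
The paper does not prove this theorem at all: it is stated in the introduction as a classical result and attributed to Pfaffelmoser~\cite{pfaff}, with no argument given. So there is no ``paper's own proof'' to compare against; the statement functions purely as background motivating the later existence results for rough data.

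Your sketch is a faithful outline of Pfaffelmoser's original argument (local theory via characteristics, continuation criterion on the velocity support $P(T)$, and the good/bad/ugly decomposition to obtain a subcritical bound on $\int|E(s,X(s))|\,ds$), which is exactly the reference the paper cites. As a sketch it is correct in spirit; the only place one must be careful is the uniqueness claim at the end: for $C^1$ compactly supported solutions one in fact gets $\rho\in C^1_c$ and hence $E\in C^1$ (not merely log-Lipschitz), so ordinary Gronwall suffices and the Osgood argument is unnecessary. But none of this bears on the paper's contribution, which lies in Sections~5--8 rather than in this quoted theorem.
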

In a different spirit, global weak solutions were proved to exist in \cite{Ar,DPLVP,DPLkin}, with only
$f^0\in L^1(\IR^6)$, $f^0\log^+f^0\in L^1$, $|v|^2f^0\in L^1$, $E^0\in L^2$ (and $\rho_b=0$, $\omega=+1$).
Related results with {\sl weak} initial data have been obtained in \cite{P,Ja,ZX}.

In this paper we would like mainly to extend the existence result of \cite{DPLVP} to initial data in $L^1$
with finite energy (in the repulsive case $\omega=+1$), avoiding the $L\log^+\!L$ assumption.
Our existence result is Theorem \ref{existence}. It involves a well-defined flow.
Even weaker solutions were considered in \cite{ZM,MMZ,MMZ2},
where the distribution function is a measure. However, these solutions do not have well-defined characteristics.

Our approach uses the theory of Lagrangian flows for transport equations with vector fields having
weak regularity, developed in \cite{lions,Amb,AC,CD,AC2}, and recently in \cite{BC,ACF,BBC}. It enables to consider force fields that
are not in $W^{1,1}_{loc}$, nor in $BV_{loc}$. In this context we prove stability results
with strongly or weakly convergent initial distribution function. The flow is proved to converge strongly anyway.
Our main results were announced in \cite{BCX}. Related results can be found in \cite{ACFVP}.

\section{Conservation of mass and energy}

We would like here to recall some basic identities related to the VP system.
Integrating \eqref{vlasov} with respect to $v$ and noting that the last term is in $v$-divergence form we obtain the {\em local conservation of mass}
\begin{equation}\label{conservationofmass}
\p_t\rho(t,x)+\dive_x(\curr(t,x))=0, 
\end{equation}
where the current $\curr$ is defined by
\begin{equation}\label{current}
\curr(t,x)=\int_{\IR^N}v f(t,x,v) \, dv.
\end{equation}
Integrating again with respect to $x$, we obtain the {global conservation of mass}
\begin{equation}
	\frac{d}{dt}\iint\limits_{\IR^N\times\IR^N}f(t,x,v)dxdv=\frac{d}{dt}\int\limits_{\IR^N}\rho(t,x) dx = 0.
	\label{eq:masscons}
\end{equation}
Multiplying \eqref{vlasov} by $ \frac{|v|^2}{2}$, integrating in $x$ and $v$, we get after integration by parts in $v$
\begin{equation}
	\frac{d}{dt}\iint\limits_{\IR^N\times\IR^N} \frac{|v|^2	}{2}f(t,x,v)dxdv - \iint\limits_{\IR^N\times\IR^N} E \cdot v f\,dxdv=0.
	\label{eq:energyfirst}
\end{equation}
Using \eqref{field} and \eqref{conservationofmass}, one has
\begin{equation}
	\partial_t E+\sum_{k=1}^N\frac{\partial}{\partial x_k}\left(\frac{\omega}{|S^{N-1}|}\frac{x}{|x|^N}*\curr_k\right)=0,
	\label{eq:evolE}
\end{equation}
or in other words
\begin{equation}
	\partial_t E=\omega\nabla_x(-\Delta_x)^{-1}\dive_x \curr,
	\label{eq:dtEdivj}
\end{equation}
which means that $\partial_t E$ is the the gradient component of $-\omega \curr$ (Helmholtz projection).
We deduce that
\begin{equation}
	\int_{\IR^N}E\cdot\partial_t E\,dx=-\omega\int_{\IR^N}E\cdot \curr\,dx.
	\label{eq:intdtE}
\end{equation}
Using \eqref{current} in \eqref{eq:energyfirst}, we obtain the {\em conservation of energy}
\begin{equation}\label{conservationenergy}
\frac{d}{dt}\left[\iint\limits_{\IR^N\times\IR^N} \frac{|v|^2}{2}f(t,x,v)dxdv+\frac{\omega}{2}\int\limits_{\IR^N } |E(t,x)|^2dx \right]=0 \,.
\end{equation}
The total conserved energy is the sum of the kinetic energy and of the potential energy multiplied by the factor $\omega=\pm 1$.
In particular, in the electrostatic case $\omega = +1$ we deduce from \eqref{conservationenergy} a uniform bound in time on both the kinetic and the potential energy,
assuming that they are finite initially.
In the gravitational case $\omega = -1$ it is not possible to exclude that the individual terms of the kinetic and potential energy become unbounded in finite time,
while the sum remains constant. Indeed it is known that it does not happen in three dimensions as soon as
$f^0$ is sufficiently integrable, but we cannot exclude this a priori for only $L^1$ solutions.\\

Note that the assumption $E^0\in L^2$ is satisfied in 3 dimensions as soon as
$\rho^0-\rho_b\in L^{6/5}$. However, in one or two dimensions, for $E^0$ to be in $L^2$ it is necessary
that $\int(\rho^0-\rho_b)dx=0$, as is easily seen in Fourier variable.
It is also necessary that $\rho^0-\rho_b$ has enough decay at infinity.
Thus in one or two dimensions, in order to have finite energy, $\rho_b$ cannot be zero identically.

\section{Regularity of the force field for $L^1$ densities}

\subsection{Singular integrals}\label{singularint}

\begin{defin}\label{skft}
A function $K$ is a \emph{singular kernel of fundamental type} in $\IR^{N}$ if the following properties hold:
\begin{enumerate}
\item $K|_{\IR^N\setminus\{0\}}\in C^1(\IR^N\setminus\{0\})$.
\item There exists a constant $C_0\geq 0$ such that
\begin{equation}\label{c0}
\begin{aligned}
\begin{array}{ccc}
|K(x)|\leq \frac{C_0}{|x|^N}, && x\in \irn\setminus\{0\}.
\end{array}
\end{aligned}
\end{equation}
\item There exists a constant $C_1\geq 0$ such that
\begin{equation}\label{c1}
\begin{aligned}
\begin{array}{ccc}
|\nabla K(x)|\leq \frac{C_1}{|x|^{N+1}},
&& x\in \irn\setminus\{0\}.
\end{array}
\end{aligned}
\end{equation}
\item There exists a constant $A_2\geq 0$ such that 
\begin{equation}
\left|\int_{R_1<|x|<R_2}K(x) dx\right| \leq A_2,
\end{equation}
for every $0<R_1<R_2<\infty$.
\end{enumerate}
\end{defin}

\begin{theorem}[Calder\'on-Zygmund]\label{skthm}

A singular kernel of fundamental type $K$ has an extension as a distribution on $\IR^N$
(still denoted by $K$), unique up to a constant times Dirac delta at the origin, such that
$\hat K\in L^\infty(\IR^N)$. Define
\begin{equation}
	Su=K* u, \quad \textrm{for } u\in L^2(\irn),
\end{equation}
in the sense of multiplication in the Fourier variable. Then we have the estimates for $1<p<\infty$
\begin{equation}\label{czk}
\begin{array}{cc}
||Su||_{L^p(\irn)} \leq C_{N,p}(C_0+C_1+||\hat{K}||_{L^\infty})||u||_{L^p(\irn)}, & u\in L^p\cap L^2(\irn).
\end{array}
\end{equation}
\end{theorem}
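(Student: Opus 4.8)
The plan is to prove the Calder\'on--Zygmund theorem in the form stated, which requires two separate things: first, that a singular kernel of fundamental type has a tempered-distribution extension with bounded Fourier transform, unique up to an additive multiple of $\delta_0$; and second, that the associated convolution operator $S$ is bounded on $L^p$ for $1<p<\infty$ with the quantitative constant displayed in \eqref{czk}.

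\textbf{Construction of the extension and the $L^\infty$ bound on $\hat K$.} I would first define, for $0<\varepsilon<R<\infty$, the truncated kernel $K_{\varepsilon,R}=K\1_{\varepsilon<|x|<R}$, which is a bona fide $L^1$ function, so $\widehat{K_{\varepsilon,R}}$ is continuous and bounded. The key is to show $\|\widehat{K_{\varepsilon,R}}\|_{L^\infty}$ is bounded uniformly in $\varepsilon,R$ by $C_N(C_0+C_1+A_2)$. Fix $\xi\neq 0$ and split the integral $\int_{\varepsilon<|x|<R}K(x)e^{-2\pi i x\cdot\xi}\,dx$ at radius $|x|\sim 1/|\xi|$. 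On the inner region $\varepsilon<|x|<1/|\xi|$ one writes $e^{-2\pi ix\cdot\xi}=1+(e^{-2\pi ix\cdot\xi}-1)$; the contribution of the constant $1$ is controlled by property (4) (the cancellation condition, bounded by $A_2$), and the contribution of $e^{-2\pi ix\cdot\xi}-1$ is controlled by $|e^{-2\pi ix\cdot\xi}-1|\leq 2\pi|x||\xi|$ together with \eqref{c0}, giving $\int_{|x|<1/|\xi|}|\xi|\,|x|\,|x|^{-N}\,dx\lesssim C_0$. On the outer region $1/|\xi|<|x|<R$ one integrates by parts: writing $e^{-2\pi ix\cdot\xi}=\frac{\xi}{2\pi i|\xi|^2}\cdot\nabla_x e^{-2\pi ix\cdot\xi}$ is not quite the cleanest route; instead the standard device is to use the substitution $x\mapsto x+\frac{\xi}{2|\xi|^2}$ (a half-period shift) so that $\int K(x)e^{-2\pi ix\cdot\xi}\,dx=-\frac12\int\big(K(x)-K(x-\tfrac{\xi}{2|\xi|^2})\big)e^{-2\pi ix\cdot\xi}\,dx$, and then estimate the difference $K(x)-K(x-h)$ with $|h|=\frac{1}{2|\xi|}$ by the gradient bound \eqref{c1}, which yields $\lesssim C_1$ after integrating $|h||x|^{-N-1}$ over $|x|\gtrsim 1/|\xi|$; one must also handle the boundary/annular error terms created by the shift near $|x|\sim 1/|\xi|$ and $|x|\sim R$ using \eqref{c0}, and these are again $O(C_0)$. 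Letting $\varepsilon\to 0$ and $R\to\infty$, the functions $\widehat{K_{\varepsilon,R}}$ are uniformly bounded; along a subsequence they converge weak-$*$ in $L^\infty$, and one checks that the limit defines (via inverse Fourier transform) a tempered distribution $K$ agreeing with the given function away from the origin. Uniqueness up to $c\,\delta_0$ follows because two such extensions differ by a distribution supported at the origin with bounded Fourier transform, hence a polynomial in $\xi$ that is bounded, hence a constant, i.e.\ a multiple of $\delta_0$.

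\textbf{$L^p$ boundedness.} With $\hat K\in L^\infty$ in hand, the $L^2$ bound $\|Su\|_{L^2}\leq\|\hat K\|_{L^\infty}\|u\|_{L^2}$ is immediate from Plancherel. To upgrade to $L^p$ I would run the classical Calder\'on--Zygmund machinery: show $S$ is of weak type $(1,1)$ via the Calder\'on--Zygmund decomposition of $u$ at height $\lambda$, splitting $u=g+b$ with $g$ the good part ($\|g\|_\infty\lesssim\lambda$, controlled in $L^2$) and $b=\sum_j b_j$ the bad part supported on disjoint cubes $Q_j$ with mean zero on each $Q_j$; the good part is handled by the $L^2$ bound and Chebyshev, and the bad part by using the H\"ormander cancellation condition on the kernel, which here follows from the gradient bound \eqref{c1} (it gives $\int_{|x|>2|y|}|K(x-y)-K(x)|\,dx\leq C_N C_1$ for all $y$). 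Then Marcinkiewicz interpolation between weak $(1,1)$ and strong $(2,2)$ gives strong $(p,p)$ for $1<p<2$, and duality (the adjoint of $S$ is convolution with $\check K(x)=K(-x)$, which is again a singular kernel of fundamental type with the same constants) gives $2<p<\infty$. Tracking constants through this argument produces exactly the form $C_{N,p}(C_0+C_1+\|\hat K\|_{L^\infty})$: the $L^2$ endpoint contributes $\|\hat K\|_{L^\infty}$, the weak-$(1,1)$ endpoint contributes $C_0$ (from the $L^2$ control of $g$) and $C_1$ (from the H\"ormander estimate on $b$), and the interpolation/duality constants are absorbed into $C_{N,p}$.

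\textbf{Main obstacle.} The genuinely delicate point is the uniform $L^\infty$ bound on $\widehat{K_{\varepsilon,R}}$ using only the \emph{four} listed properties — in particular extracting the needed cancellation purely from property (4) rather than from a mean-zero-on-spheres hypothesis, and doing so uniformly in both truncation parameters $\varepsilon$ and $R$. Once $\hat K\in L^\infty$ is established with the right dependence on $C_0+C_1+A_2$, the passage to $L^p$ is the standard CZ/interpolation/duality argument and is essentially bookkeeping of constants; I expect the main care there to be only in verifying that the H\"ormander condition with constant $C_N C_1$ really does follow from \eqref{c1} and that the adjoint kernel has the same structural constants.
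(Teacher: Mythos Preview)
The paper does not prove this theorem: it is stated as a classical result (the Calder\'on--Zygmund theorem) and used as background, with no \texttt{proof} environment attached. There is therefore nothing in the paper to compare your argument against.

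That said, your outline is a correct and complete sketch of the standard proof. The construction of the tempered extension via uniform $L^\infty$ bounds on $\widehat{K_{\varepsilon,R}}$, with the split at $|x|\sim 1/|\xi|$, the use of property~(4) for the constant term on the inner annulus, the half-period shift combined with \eqref{c1} on the outer region, and the uniqueness argument via distributions supported at the origin with bounded Fourier transform, are all the right ingredients. For the $L^p$ bound, Plancherel for $p=2$, the Calder\'on--Zygmund decomposition plus the H\"ormander condition (which indeed follows from \eqref{c1} with constant $C_N C_1$) for weak-$(1,1)$, Marcinkiewicz interpolation, and duality via $K(-x)$ are exactly the classical steps, and your constant-tracking matches the stated form of \eqref{czk}. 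Nothing is missing.
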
 
If $K$ is a singular kernel of fundamental type, we call the associated operator $S$ a singular integral operator on $\irn$.
We define then the Fr\'echet space $\Riesz(\IR^N)=\cap_{m\in {\mathbb{N}},\,1<p<\infty}W^{m,p}(\IR^N)$
and its dual $\Riesz'(\IR^N)\subset \IS'(\irn)$, where $\IS'(\irn)$ is the space of tempered distributions on $\IR^N$.
Since all singular integral operators are bounded on $\Riesz(\IR^N)$, by duality we can define the operator
$S$ also $\Riesz'(\IR^N)\to \Riesz'(\IR^N)$.
In particular it enables to define $Su$ for $u\in L^1(\IR^N)$ or for $u$ a measure.
The result $Su$ is in $\Riesz'(\IR^N)\subset \IS'(\irn)$.

\subsection{The split vector field}

Let $\rho(t,x)\in L^\infty((0,T); L^1(\IR^N))$. We denote by 
\begin{equation}\label{bvf}
b(t,x,v)=(b_1,b_2)(t,x,v)=(v,E(t,x))=\Bigl(v,-\omega\nabla_x(-\Delta_x)^{-1}(\rho(t,x)-\rho_b(x))\Bigr)
\end{equation}
the associated vector field on $(0,T) \times \IR^N\times\IR^N$.
Then the Vlasov equation can be written in the form of the transport equation $\p_t f+b\cdot\nabla_{x,v} f=0$. 
In the following subsections we establish bounds on the vector field $b$.

\subsection{Local integrability}\label{localintegr}

For $L^1$ densities, we have the weak estimates from the Hardy-Littlewood-Sobolev inequality:
\begin{equation}\begin{array}{l}
	\displaystyle\hphantom{=\ } \left|\left|\left|\nabla (-\Delta)^{-1}(\rho(t,x)-\rho_b(x))\right|\right|\right|_{M^{\frac{N}{N-1}}(\IR^N)}\\
\displaystyle \leq \left|\left|\left|{\frac{1}{|S^{N-1}|}|x|^{1-N}}*|\rho(t,x)-\rho_b(x)| \right|\right|\right|_{M^{\frac{N}{N-1}}(\IR^N)}\\
\displaystyle\leq c_N ||\rho(t,x)-\rho_b(x)||_{L^1(\IR^N)},
	\label{kern}
	\end{array}
\end{equation}
where $|||u|||_{M^p(\IR^N)}\equiv\sup_{\gamma>0}\gamma\mathop{\La^N}(\{x\in\IR^N\,s.t.\, |u(x)|>\gamma\})^{1/p}$.
It follows that
\begin{equation}
|||E |||_{L^\infty((0,T);M^{\frac{N}{N-1}}( \IR^N))} \leq c_N||\rho-\rho_b||_{ L^\infty((0,T);L^{1}( \IR^N))},
\end{equation}
and using the inclusion $M^{\frac{N}{N-1}} (\irn) \subset L^p_{\loc}(\irn)$ for $1\leq p<\frac{N}{N-1}$ we conclude that
$b\in L^\infty((0,T);L^p_{\loc}(\irnxv))$ for any $1\leq p<\frac{N}{N-1}$, since $v\in L^p_{\loc}(\irnxv)$ for any $p$.

\subsection{Spatial regularity}\label{spatialreg}

Since $b_1=v$ is smooth, the only non-trivial gradient is the one of $b_2=E$,
indeed the differential matrix of the vector field is given by
\begin{equation}
Db =
\left( \begin{matrix}
D_x b_1 & D_{v}b_1\\
D_{x}b_2 & D_{v}b_2 
\end{matrix} \right)
=\left( \begin{matrix} 
 0 & {\rm Id} \\
D_{x} E & 0
\end{matrix} \right).
\end{equation}
We have  by \eqref{bvf}
\begin{equation}
(D_{x} E)_{ij}\equiv\partial_{x_j}E_i = -\omega\partial^2_{x_ix_j}((-\Delta_x)^{-1}(\rho-\rho_b))
\quad\mbox{for }1\leq i,j \leq N.
\end{equation}
It is well-known that the operator $\partial^2_{x_ix_j}(-\Delta_x)^{-1}$ is a singular integral operator.
Its kernel is
\begin{equation}
	K_{ij}(x)=-\frac{1}{|S^{N-1}|}\frac{\partial}{\partial_{x_j}}\left(\frac{x_i}{|x|^N}\right),
	\label{eq:kernelkij}
\end{equation}
it is given outside of the origin by 
\begin{align} \label{n3}
K_{ij}(x)=\frac{1}{|S^{N-1}|}\left(N\frac{x_ix_j}{|x|^{N+2}}-\frac{\delta_{ij}}{|x|^N}\right),
	\quad\mbox{ for }x\in\IR^N\backslash\{0\}.
\end{align}
The kernel satisfies the conditions of Subsection \ref{singularint},
and $\hat K_{ij}(\xi)=-\xi_i\xi_j/|\xi|^2$.
Thus (each component of) $D_xE$ is a singular integral of an $L^\infty((0,T);L^1(\IR^N))$ function.

\subsection{Time regularity}

According to \eqref{eq:dtEdivj}, $\partial_t E$ is a singular integral of the current $\curr$
defined  by \eqref{current}.
Using the bounds available for solutions with finite mass and energy
\begin{equation}
	||f (t,\cdot)||_{L^1(\IR^N_x\times\IR^N_v)},  \iint|v|^2f (t,x,v)dxdv  \leq C,
	\label{eq:boundmen}
\end{equation}
and since $|v|\leq 1+|v|^2$, we get that $\curr \in L^\infty((0,T);L^1(\IR^N_x))$.
Hence $\partial_t E$ is a singular integral of an $L^\infty((0,T);L^1(\IR^N_x))$ function.
In particular,
\begin{equation}
\p_t E\in L^\infty((0,T);\IS'(\irn)).
\end{equation}

\section{Lagrangian flows}

Suppose that $f$ is a smooth solution to \eqref{vlasov}-\eqref{eq:defrho} with $f(0,x,v)=f^0$. Then $f$ is constant along the characteristics $(X(s,t,x,v),V(s,t,x,v))$, which solve the system of equations
\begin{equation}
	\begin{cases}
\displaystyle \frac{dX}{ds}(s,t,x,v)=V(s,t,x,v), \\ \\
\displaystyle \frac{dV}{ds}(s,t,x,v)=E (s,X(s,t,x,v),V(s,t,x,v)),
\end{cases}
	\label{eq:chareq}
\end{equation}
with initial data $X(t,t,x,v)=x$ and $V(t,t,x,v)=v$. 
Thus the solution can be expressed as $f(t,x,v)=f^0(X(0,t,x,v),V(0,t,x,v))$. 

In order to extend this notion of characteristics to non-smooth solutions, we define regular Lagrangian flows, which are defined in an almost everywhere sense.

\begin{defin}\label{flow}
Let $b\in L^1_{\rm{loc}}([0,T]\times\IR^{2N};\IR^{2N})$, and $t\in[0,T)$.
A map $Z:[t,T]\times\IR^{2N}\rightarrow\IR^{2N}$ is a regular Lagrangian flow
starting at time $t$ for the vector field $b$ if
\begin{itemize}
\item[(1)] for a.e. $z\in\IR^{2N}$ the map $s\mapsto Z(s,z)$ is an absolutely continuous integral solution of $\dot{\beta}(s)=b(s,\beta(s))$ for $s\in[t,T]$ with $\beta(t)=z$.
\item[(2)] There exists a constant $L$ independent of $s$ such that
$$\La^{2N}(Z(s,\cdot)^{-1}(A))\leq L\La^{2N}(A)$$ for every Borel set $A\subset\IR^{2N}$.
\end{itemize}
The constant $L$ is called the compressibility constant of $Z$. 
\end{defin}

\begin{defin}\label{sublevel}
Define the sublevel of the flow as the set
\begin{equation}
	G_\lambda=\{z\in\IR^{2N}: |Z(s,z)|\leq\lambda \text{ for almost all } s\in[t,T]\}.
	\label{eq:sublevel}
\end{equation}
\end{defin}
 
\section{Stability estimate for Lagrangian flows}

We summarize the main result from \cite{BC} in the following regularity setting of the vector field in arbitrary dimension.
We say that a vector field $b$ satisfies {\bf(R1)} if $b$ can be decomposed as
\begin{equation}
\frac{b(t,z)}{1+|z|} = \tilde{b}_1(t,z)+\tilde{b}_2(t,z) 
\end{equation}
where $\tilde{b}_1\in L^1((0,T);L^1(\IR^{2N}))$, $\tilde{b}_2\in L^1((0,T);L^\infty(\IR^{2N}))$.

We assume also that $b$ satisfies {\bf (R2)}: for every $j=1,\ldots,2N$, 
\begin{equation}\label{r2}
\p_{z_j} b = \displaystyle\sum_{k=1}^m S_{jk} g_{jk}
\end{equation}
where $S_{jk}$ are singular integrals of fundamental type on $\IR^{2N}$ and $g_{jk} \in L^1((0,T);L^1(\IR^{2N}))$.
Moreover, we assume condition {\bf (R3)}, that is
\begin{equation}\label{R3}
b \in L^p_{\loc}([0,T]\times\IR^{2N}), \qquad \mbox{ for some } p>1.
\end{equation}
\\
We recall the following stability theorem from \cite{BC}, where we denote by $B_r$
the ball with center $0$ and radius $r$ in $\IR^{2N}$.
\begin{theorem}\label{fund}
Let $b$, $\bar{b}$ be two vector fields satisfying {\bf (R1)}, $b$ satisfying also {\bf (R2), (R3)}. Fix $t\in[0,T)$ and let $Z$ and $\bar{Z}$ be regular Lagrangian flows starting at time $t$ associated to $b$ and $\bar{b}$ respectively, with compression constants $L$ and $\bar{L}$.
Then the following holds.
\\For every $\gamma>0$, $r>0$ and $\eta>0$ there exist $\lambda>0$  and $C_{\gamma,r,\eta}>0$ such that
$$\La^{2N}\left(B_r\cap \{|Z(s,\cdot)-\bar{Z}(s,\cdot)|>\gamma\}\right)\leq C_{\gamma,r,\eta} ||b-\bar{b}||_{L^1((0,T)\times B_\lambda)}+\eta$$ for all $s\in[t,T]$. 
The constants $\lambda$ and $C_{\gamma,r,\eta}$ also depend on 
\begin{itemize}
 \item The equi-integrability in $L^1((0,T);L^1(\IR^{2N}))$ of $g_{jk}$ coming from {\bf(R2)},
\item The norms of the singular integral operators $S_{jk}$ from {\bf(R2)},
\item The norm $||b||_{L^p((0,T)\times B_\lambda)}$ corresponding to {\bf(R3)},
\item The $L^1(L^1)$ and $L^1(L^\infty)$ norms of the decompositions of $b$ and $\bar{b}$ in {\bf(R1)},
\item The compression constants $L$ and $\bar{L}$. 
\end{itemize}
\end{theorem}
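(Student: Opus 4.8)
The plan is to follow the quantitative stability scheme of \cite{CD}, in the form adapted to singular integrals in \cite{BC}: control a logarithmic functional comparing the two flows along their trajectories and then convert the resulting bound into the claimed measure estimate by Chebyshev's inequality. Fix $\gamma,r,\eta>0$. The first step is a reduction to the sublevels of Definition \ref{sublevel}. Writing $b/(1+|z|)=\tilde b_1+\tilde b_2$ as in \textbf{(R1)} and using the compressibility bound $\La^{2N}(Z(s,\cdot)^{-1}(A))\le L\La^{2N}(A)$, the quantity $\int_{B_r}\frac{|Z(s,z)|}{1+|Z(s,z)|}\,dz$ is bounded uniformly in $s$ in terms of the $L^1(L^1)$ and $L^1(L^\infty)$ norms of the decomposition and of $L$; hence $\La^{2N}(B_r\setminus G_\lambda)\to 0$ as $\lambda\to\infty$, and similarly for the sublevel $\bar G_\lambda$ of $\bar Z$. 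So I would fix $\lambda$ large enough that $\La^{2N}\big(B_r\setminus(G_\lambda\cap\bar G_\lambda)\big)\le\eta/2$, after which it suffices to estimate the measure of $B_r\cap G_\lambda\cap\bar G_\lambda\cap\{|Z(s,\cdot)-\bar Z(s,\cdot)|>\gamma\}$.

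For $\delta\in(0,1)$ to be chosen at the very end, introduce
\[
\Phi_\delta(s)=\int_{B_r\cap G_\lambda\cap\bar G_\lambda}\log\!\Big(1+\frac{|Z(s,z)-\bar Z(s,z)|}{\delta}\Big)\,dz,\qquad \Phi_\delta(t)=0.
\]
Differentiating in $s$ and using that $s\mapsto Z(s,z)$, $s\mapsto\bar Z(s,z)$ solve the corresponding ODEs,
\[
\Phi_\delta(s)\le\int_t^s\!\!\int_{B_r\cap G_\lambda\cap\bar G_\lambda}\frac{|b(\tau,Z)-\bar b(\tau,\bar Z)|}{\delta+|Z-\bar Z|}\,dz\,d\tau\le\mathrm{I}+\mathrm{II},
\]
where $\mathrm{II}$ is the contribution of $|b(\tau,\bar Z)-\bar b(\tau,\bar Z)|$ and $\mathrm{I}$ that of $|b(\tau,Z)-b(\tau,\bar Z)|$. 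The term $\mathrm{II}$ is elementary: bounding the denominator below by $\delta$ and changing variables by $\bar Z(\tau,\cdot)$, which maps $\bar G_\lambda$ into $B_\lambda$ with Jacobian controlled by $\bar L$, gives $\mathrm{II}\le\delta^{-1}\bar L\,\|b-\bar b\|_{L^1((0,T)\times B_\lambda)}$; this produces the first term in the conclusion, once $\delta$ is frozen.

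The core of the argument is the term $\mathrm{I}$, and this is where \textbf{(R2)} and \textbf{(R3)} enter. The key ingredient is the difference-quotient estimate for a vector field whose gradient is a sum of singular integrals: from \textbf{(R2)} one has, for a.e. $\tau,x,y$ with $|x-y|\le R$, the bound $|b(\tau,x)-b(\tau,y)|\le C|x-y|\sum_{jk}\big(\mathcal M_R(S_{jk}g_{jk})(\tau,x)+\mathcal M_R(S_{jk}g_{jk})(\tau,y)\big)$ with $\mathcal M_R$ a maximal operator truncated at scale $R$, while for $|x-y|>R$ one writes $\frac{|b(\tau,x)-b(\tau,y)|}{|x-y|}\le R^{-1}(|b(\tau,x)|+|b(\tau,y)|)$. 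Inserting $x=Z$, $y=\bar Z$, using $\delta+|Z-\bar Z|\ge|Z-\bar Z|$ and changing variables by $Z(\tau,\cdot)$ and $\bar Z(\tau,\cdot)$, the $|x-y|>R$ part contributes $\le R^{-1}(L+\bar L)\int_0^T\!\!\int_{B_\lambda}|b(\tau,y)|\,dy\,d\tau$, finite and $\delta$-independent by \textbf{(R3)} and H\"older, hence absorbed in a constant $C_1$. The genuine new difficulty relative to \cite{CD} is that the $g_{jk}$ are only in $L^1$, so $S_{jk}g_{jk}$ and $\mathcal M_R(S_{jk}g_{jk})$ need not be locally integrable and the first group of terms cannot be treated directly; I would overcome this by exploiting the equi-integrability of the $g_{jk}$, splitting $g_{jk}=g_{jk}^{1}+g_{jk}^{2}$ with $\|g_{jk}^{1}\|_{L^\infty}\le M$ and $\|g_{jk}^{2}\|_{L^1}\le\varepsilon$. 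For the bounded part, Theorem \ref{skthm} gives $S_{jk}g_{jk}^{1}\in L^q_{\loc}$ for every $q\in(1,\infty)$, so $\mathcal M_R(S_{jk}g_{jk}^{1})\in L^q_{\loc}\subset L^1(B_\lambda)$, and its contribution after the change of variables is a constant (depending on $M$) times $T$, again $\delta$-independent. For the remainder, the Calder\'on--Zygmund weak-type bound gives $S_{jk}g_{jk}^{2}\in L^{1,\infty}$ with norm $\le C\varepsilon$, hence $\mathcal M_R(S_{jk}g_{jk}^{2})\in L^{1,\infty}$ with norm $\le C\varepsilon$; therefore the superlevel set of $\sum_{jk}\mathcal M_R(S_{jk}g_{jk}^{2})(\tau,\cdot)$ at height $\sigma$ has measure $\le C\varepsilon/\sigma$, and removing from $B_r$ its preimages under $Z(\tau,\cdot)$ and $\bar Z(\tau,\cdot)$ — a set of measure $\le C(L+\bar L)\varepsilon/\sigma$, made $\le\eta/2$ by choosing $\varepsilon$ small once $\sigma$ is fixed — leaves a region on which the remainder contributes at most $C\sigma T$, once more a $\delta$-independent piece of $C_1$.

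Collecting the estimates one obtains $\Phi_\delta(s)\le C_1+\delta^{-1}\bar L\,\|b-\bar b\|_{L^1((0,T)\times B_\lambda)}$, with $C_1$ independent of $\delta$ and of $\|b-\bar b\|$ and depending only on the quantities listed in the statement; note that no Gr\"onwall inequality is needed, the bound being linear in $s$. On the set where $|Z(s,\cdot)-\bar Z(s,\cdot)|>\gamma$ the integrand of $\Phi_\delta$ is $\ge\log(1+\gamma/\delta)$, so Chebyshev gives
\[
\La^{2N}\big(B_r\cap G_\lambda\cap\bar G_\lambda\cap\{|Z(s,\cdot)-\bar Z(s,\cdot)|>\gamma\}\big)\le\frac{C_1}{\log(1+\gamma/\delta)}+\frac{\bar L\,\|b-\bar b\|_{L^1((0,T)\times B_\lambda)}}{\delta\log(1+\gamma/\delta)}.
\]
Choosing $\delta$ small, depending on $\gamma$, $\eta$ and $C_1$ (hence on $r$ and on the listed norms and constants), so that the first term is $\le\eta/2$, and adding back the $\eta/2$ lost in the reduction to sublevels, yields the claimed inequality with $C_{\gamma,r,\eta}=\bar L/(\delta\log(1+\gamma/\delta))$. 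The main obstacle — and the reason the statement requires equi-integrability of the $g_{jk}$ rather than merely their $L^1$ norms — is exactly the step above: a singular integral of an $L^1$ function, and its maximal function, live only in weak-$L^1$, so the quantitative estimate cannot be closed by a bare change of variables, and one must peel off an equi-integrable small remainder and carefully discard a small exceptional set for the trajectories of \emph{both} flows.
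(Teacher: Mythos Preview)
The paper does not give its own proof of Theorem~\ref{fund}; it is quoted from \cite{BC}, and only the variant Theorem~\ref{Th estimatestab} is sketched, following the same scheme. Your outline is that scheme, and at the architectural level it is right: restrict to sublevels, differentiate the logarithmic functional $\Phi_\delta$, bound the difference quotient of $b$ by maximal functions of $S_{jk}g_{jk}$, split $g_{jk}$ via equi-integrability, and conclude by Chebyshev. Two steps, however, are not correct as written.

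First, the quantity $\int_{B_r}\frac{|Z(s,z)|}{1+|Z(s,z)|}\,dz$ is trivially bounded by $\La^{2N}(B_r)$ (the integrand is at most $1$) and gives no decay of $\La^{2N}(B_r\setminus G_\lambda)$. The right functional is $\int_{B_r}\log(1+|Z(s,z)|)\,dz$: from $\partial_s\log(1+|Z|)\le|\tilde b_1(s,Z)|+|\tilde b_2(s,Z)|$ one integrates in $s$ and then in $z\in B_r$, handling the $L^1(L^1)$ piece by the change of variables and the $L^1(L^\infty)$ piece by the sup bound.

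Second, your handling of the weak-$L^1$ remainder does not close. You propose to discard from $B_r$ the preimages, under $Z(\tau,\cdot)$ and $\bar Z(\tau,\cdot)$, of the superlevel $\{\mathcal M_R(S_{jk}g^{2}_{jk})(\tau,\cdot)>\sigma\}$; but this excised set depends on the running time $\tau$, whereas the domain of $\Phi_\delta$ must be fixed once for all, and the union over $\tau\in[t,T]$ of these $\tau$-slices is uncontrolled. Removing it from the final measure estimate at time $s$ does not help either, since on that set you have no bound on the integrand contributing to $\Phi_\delta'(\tau)$. What \cite{BC} (and the paper, in its proof of Theorem~\ref{Th estimatestab}) retains is the \emph{minimum}
\[
\min\Bigl\{\tfrac{|b(s,Z)|+|b(s,\bar Z)|}{\delta},\ \U(s,Z)+\U(s,\bar Z)\Bigr\}
\]
inside the integral throughout, and then invokes an interpolation lemma bounding $\int_\Omega\min\{w,u\}$ for $u\in M^1$ of small seminorm and $w\in L^p$ on a set $\Omega$ of finite measure. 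This is where \textbf{(R3)} is genuinely used for the small-$L^1$ piece --- not only for the large-separation regime $|x-y|>R$ --- and without the $\min$ the fallback bound $|b|/\delta$ is unavailable on the bad set, so the argument cannot be finished.
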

We would like now to state a variant of this theorem, where {\bf(R1)} and {\bf(R2)}
are replaced by {\bf(R1a)} and {\bf(R2a)}.\\

We consider the following weakened assumption {\bf (R1a)}: 
for all regular Lagrangian flow $Z:[t,T]\times\IR^{2N}\rightarrow\IR^{2N}$ relative to $b$
starting at time $t$ with compression constant $L$, and for all $r,\lambda>0$,
\begin{equation}\label{r1a}
 \La^{2N}(B_r\setminus G_\lambda)\leq g(r,\lambda),
	\quad\mbox{with }g(r,\lambda)\to 0 \text{ as } \lambda\to\infty\mbox{ at fixed }r,
\end{equation}
where $G_\lambda$ denotes the sublevel of the flow $Z$, defined in \eqref{eq:sublevel}.\\

We next consider a splitting of the variables, $\IR^{2N}_z=\IR^N_x\times\IR^N_v$,
and we denote $Z(t,x,v)=(X,V)(t,x,v)$.
Noticing the special form \eqref{bvf}, we assume the condition that $b$ satisfies {\bf (R2a)}: 
\begin{equation}
b(t,x,v)=(b_1,b_2)(t,x,v)=(b_1(v),b_2(t,x)),
\end{equation}
with
\begin{equation}
	b_1 \in \textrm{Lip} (\IR^{N}_v),
	\label{eq:hyplipb1}
\end{equation}
and where $b_2$ involves singular kernels only in the first set of variables $x$, that is for every $j=1,...,N$,
\begin{equation}
\p_{x_j} b_2 = \displaystyle\sum_{k=1}^m S_{jk} g_{jk}, 
\end{equation}
where $S_{jk}$ are singular integrals of fundamental type on $\IR^{N}$ and $g_{jk} \in L^1((0,T);L^1(\IR^{N}))$.
\begin{theorem}\label{Th estimatestab}
Let $b$, $\bar{b}$ be two vector fields satisfying {\bf(R1a)}, $b$ satisfying also {\bf (R2a), (R3)}.
Fix $t\in[0,T)$ and let $Z$ and $\bar{Z}$ be regular Lagrangian flows starting at time $t$
associated to $b$ and $\bar{b}$ respectively, with compression constants $L$ and $\bar{L}$,
and sublevels $G_\lambda$ and $\bar{G}_\lambda$.
Then the following holds.
\\For every $\gamma>0$, $r>0$ and $\eta>0$, there exist $\lambda>0$  and $C_{\gamma,r,\eta}>0$ such that
$$\La^{2N}\left(B_r\cap \{|Z(s,\cdot)-\bar{Z}(s,\cdot)|>\gamma\}\right)\leq C_{\gamma,r,\eta} ||b-\bar{b}||_{L^1((0,T)\times B_\lambda)}+\eta$$
for all  $s\in[t,T]$. 
The constants $\lambda$ and $C_{\gamma,r,\eta}$ also depend on 
\begin{itemize}
\item The equi-integrability in $L^1((0,T);L^1(\IR^{N}))$ of $g_{jk}$ coming from {\bf(R2a)},
\item The norms of the singular integral operators $S_{jk}$ from {\bf(R2a)}, 
\item The Lipschitz constant of $b_1$ from {\bf(R2a)}, 
\item The norm $||b||_{L^p((0,T)\times B_\lambda)}$ corresponding to {\bf(R3)},
\item The rate of decay of $\La^{2N}(B_r\setminus  {G}_\lambda) $ and $\La^{2N}(B_r\setminus \bar{G}_\lambda)$ from {\bf(R1a)},
\item The compression constants $L$ and $\bar{L}$. 
\end{itemize}
\end{theorem}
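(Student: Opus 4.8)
The plan is to reduce Theorem \ref{Th estimatestab} to Theorem \ref{fund} by showing that, modulo controllable errors, the hypotheses {\bf(R1a)} and {\bf(R2a)} give us access to the conclusion of Theorem \ref{fund}. The key structural observation is that the obstruction to {\bf(R1)} and {\bf(R2)} being satisfied is twofold: first, the $x$-singular integrals in {\bf(R2a)} act only on the $x$-variables and do not regularize in $v$, but as operators on $\IR^{2N}$ they can be viewed as singular integrals of fundamental type in all $2N$ variables once we check the conditions of Definition \ref{skft} for the kernel $K_{jk}(x)\otimes\delta_0(v)$ — this is delicate because the tensor with a Dirac mass is not literally a function, so instead I would argue that the composition with the ($v$-independent) operator still satisfies the Calder\'on–Zygmund estimate \eqref{czk} on $L^p(\IR^{2N})$ with a norm controlled by the $\IR^N$-norm, and that this is all that Theorem \ref{fund} actually uses about the $S_{jk}$. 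Similarly, $b_1(v)=v$-type Lipschitz dependence means $D_vb_1$ is bounded, hence trivially a singular integral (a multiple of the identity plus zero) of an $L^\infty\subset L^1_{loc}$ — but to fit the $L^1(L^1)$ framework one truncates to a large ball $B_\lambda$ and absorbs the error via the $\eta$ in the statement.

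Concretely, the steps I would carry out are: (i) Fix $\gamma,r,\eta$. Choose $\lambda$ large enough that, by {\bf(R1a)} applied to both $Z$ and $\bar Z$, the sets $B_r\setminus G_\lambda$ and $B_r\setminus\bar G_\lambda$ have measure $\le\eta/4$; on the complementary sublevels the flows stay inside $B_\lambda$ for a.e.\ time, so only the behaviour of $b,\bar b$ on $(0,T)\times B_\lambda$ matters. (ii) Replace $b$ by $b^\lambda = b\,\1_{B_{2\lambda}}$ (and similarly $\bar b$), smoothly truncated; the truncated field now genuinely satisfies {\bf(R1)} since $b_1(v)=v$ restricted to $B_{2\lambda}$ is bounded hence in $L^1(L^\infty)$, and the $x$-dependent part retains its singular-integral structure with $g_{jk}\1_{B_{2\lambda}}$, which is still equi-integrable in $L^1((0,T);L^1(\IR^{2N}))$ (here one must be slightly careful that truncation does not destroy equi-integrability — it does not, since it only shrinks the functions). (iii) Verify {\bf(R2)} for $b^\lambda$: this is the point where I would spell out that $\p_{v_j}b_1$ is bounded, write it as (a constant)$\cdot$(identity singular integral) applied to the constant function, or more honestly absorb $D_vb_1$ into the "good" part of the decomposition in a variant of the proof in \cite{BC}; and that $\p_{x_j}b_2$, extended trivially in $v$, is $\sum_k \tilde S_{jk}(g_{jk}\otimes 1)$ with $\tilde S_{jk}$ a fundamental-type singular integral on $\IR^{2N}$ — which requires checking conditions (1)–(4) of Definition \ref{skft} for the extended kernel, the only nontrivial one being the cancellation condition (4), inherited from the $\IR^N$ cancellation by Fubini over the $v$-slices. (iv) Apply Theorem \ref{fund} to $b^\lambda,\bar b^\lambda$ to get the estimate with constant $C_{\gamma,r,\eta}$ depending on exactly the listed quantities (all of which are controlled by the corresponding quantities for $b,\bar b$ plus $\lambda$). (v) Finally, note $\|b^\lambda-\bar b^\lambda\|_{L^1((0,T)\times B_\lambda)} = \|b-\bar b\|_{L^1((0,T)\times B_\lambda)}$ since the truncation is the identity on $B_\lambda\subset B_{2\lambda}$, and observe that the flows of $b^\lambda$ and $b$ coincide on $G_\lambda$ (the part that matters) for a.e.\ time, up to a null set, by uniqueness of trajectories staying in the region where the fields agree — the discrepancy is supported in $B_r\setminus G_\lambda$ and contributes at most $\eta/4+\eta/4$. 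Collecting the error terms gives the claimed bound with $\eta$ on the right-hand side.

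The main obstacle, I expect, is step (iii): making rigorous the claim that a singular integral operator acting only on the $x$-variables of $\IR^N_x\times\IR^N_v$ is a "singular integral of fundamental type on $\IR^{2N}$" in the precise sense of Definition \ref{skft}. The kernel $K_{jk}(x)\delta_0(v)$ is genuinely singular along the whole subspace $\{x=0\}$, not just at the origin, so it fails the pointwise bounds \eqref{c0}–\eqref{c1} as a function on $\IR^{2N}$. The correct fix is not to force this kernel into Definition \ref{skft} but rather to re-examine the proof of Theorem \ref{fund} in \cite{BC} and check that the argument only uses: the $L^p\to L^p$ boundedness of $S_{jk}$ (which holds on $\IR^{2N}$ by tensorization), and a smoothing/commutator estimate that can be performed slice-by-slice in $v$. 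This is precisely why the theorem is stated as a "variant" — the whole content of {\bf(R2a)} versus {\bf(R2)} is that the anisotropic singular integrals need a separate, slice-wise treatment in the key commutator lemma. I would therefore either invoke the version of that lemma for anisotropic kernels (if available in \cite{BC} or \cite{BBC}) or sketch the one-line modification: apply the isotropic estimate in the $x$-variables uniformly in $v$, then integrate in $v$ using the Lipschitz bound on $b_1$ to control the $v$-transport of the difference quotient.
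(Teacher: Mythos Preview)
Your instinct in the final paragraph---to re-examine the proof of Theorem~\ref{fund} rather than reduce to it---is correct and is exactly what the paper does; but the bulk of your proposal (steps (i)--(v)) is a detour through a reduction that you yourself recognise cannot be closed. The obstruction in step~(iii) is fatal for the reduction strategy: the kernel $K_{jk}(x)\otimes\delta_0(v)$ fails Definition~\ref{skft} on $\IR^{2N}$ not in a merely technical sense (it is singular along an entire $N$-dimensional subspace), and the proof of Theorem~\ref{fund} in \cite{BC} uses the pointwise kernel bounds \eqref{c0}--\eqref{c1} through the smooth maximal function construction, not only $L^p$-boundedness, so tensorising the $L^p$ estimate is not enough. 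Your step~(v) also has a gap: asserting that the flows of $b$ and $b^\lambda$ coincide on $G_\lambda$ ``by uniqueness of trajectories'' presupposes a uniqueness result for $b^\lambda$ that you have not established.

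The paper's argument is a direct modification of the proof of Theorem~\ref{fund}, and the mechanism is more specific than your ``one-line modification'' suggests. One works with the functional $\Phi_\delta(s)=\int_{B_r\cap G_\lambda\cap\bar G_\lambda}\log(1+|Z-\bar Z|/\delta)\,dz$ for the \emph{original} flows (no truncated vector fields, no auxiliary flows). Differentiating, the difference quotient of $b$ splits according to {\bf(R2a)}: since $b_1$ depends only on $v$ and $b_2$ only on $(t,x)$, one has $|b_1(V)-b_1(\bar V)|/|V-\bar V|\le\mathrm{Lip}(b_1)$, which integrates to the harmless constant $\mathrm{Lip}(b_1)\,\La^{2N}(B_r)$. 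For $b_2$, the difference-quotient estimate from \cite{BC} is applied \emph{in $\IR^N_x$ only}, giving a maximal-type function $\U(s,x)=\sum_{j,k}M_j(S_{jk}g_{jk}(s,\cdot))$ on $\IR^N$ such that $|b_2(s,X)-b_2(s,\bar X)|/|X-\bar X|\le\U(s,X)+\U(s,\bar X)$. The step your proposal does not articulate is how this $x$-only bound is integrated over $\IR^{2N}$: one sets $\Z(s,x,v)=\U(s,x)\1_{(x,v)\in\overline{B_\lambda}}$ and observes that on $G_\lambda\cap\bar G_\lambda$ one may replace $\U(s,X)$ by $\Z(s,Z)$; after the change of variables $z\mapsto Z(s,z)$, $\Z$ has compact support in $v$ and thus falls into the same weak-$L^1$/$L^2$ interpolation scheme on $\IR^{2N}$ as in \cite{BC}. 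The equi-integrability decomposition $g_{jk}=g_{jk}^1+g_{jk}^2$ and the final choice of parameters ($\lambda$ large via {\bf(R1a)}, then $\varepsilon$ small, then $\delta$ small) proceed unchanged.
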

\begin{proof} We summarize the modifications of the proof of Theorem \ref{fund} from \cite{BC}.
The main added difficulty is the singular integral operators on $\IR^{N}_x$ instead of $\IR^{2N}$.
More general situations with singular integrals of measures instead of $L^1$ functions
are considered in \cite{BBC}.
Let $z=(x,v)\in \IR^{N}\times\IR^{N}$. We estimate the quantity
\begin{equation}
	\Phi_\delta(s) =  \int\limits_{B_r\cap G_\lambda \cap \wbar{G}_\lambda} \log\left(1+\frac{|Z(s,z)-\wbar{Z}(s,z)|}{\delta}\right) dz \,.
	\label{eq:defPhidelta}
\end{equation}
Differentiating with respect to time, we get
\begin{align*}
 \Phi_{\delta}'(s)\leq &\int\limits_{B_r\cap G_\lambda \cap \wbar{G}_\lambda} \frac{|b(s,Z(s,z))-\wbar{b}(s,\wbar{Z}(s,z))|}{\delta+|Z(s,z)-\wbar{Z}(s,z)|} dz\\
 \leq &\frac{\wbar{L}}{\delta}||b(s,\cdot)-\wbar{b}(s,\cdot)||_{L^1(B_\lambda)} \\
&+\int\limits_{B_r\cap G_\lambda \cap \wbar{G}_\lambda} \min\biggl\{\frac{|b(s,Z(s,z))|+|b(s,\wbar{Z}(s,z))|}{\delta}, \\
& \frac{|b_1(s,Z(s,z))-b_1(s,\wbar{Z}(s,z))|}{|Z(s,z)-\wbar{Z}(s,z)|}
+ \frac{|b_2(s,Z(s,z))-b_2(s,\wbar{Z}(s,z))|}{|Z(s,z)-\wbar{Z}(s,z)|} \biggr\}dz.
\end{align*}
Using the special form of $b$ from {\bf(R2a)}, we obtain 
\begin{equation}
\begin{aligned}
 \Phi_{\delta}'(s)\leq &\frac{\wbar{L}}{\delta}||b(s,\cdot)-\wbar{b}(s,\cdot)||_{L^1(B_\lambda)} \\
&+\int\limits_{B_r\cap G_\lambda \cap \wbar{G}_\lambda} \min\biggl\{\frac{|b(s,Z(s,z))|+|b(s,\wbar{Z}(s,z))|}{\delta},\\
&\quad \frac{|b_1(V(s,z))-b_1(\wbar V(s,z))|}{|V(s,z)-\wbar V(s,z)|}
  + \frac{|b_2(s,X(s,z))-b_2(s,\wbar X(s,z))|}{|X(s,z)-\wbar X(s,z)|} \biggr\}dz \\ 
    \leq &\frac{\wbar{L}}{\delta}||b(s,\cdot)-\wbar{b}(s,\cdot)||_{L^1(B_\lambda)} + \textrm{Lip}(b_1)\La^{2N}(B_r) \\
  &+  \int\limits_{B_r\cap G_\lambda \cap \wbar{G}_\lambda} \min\biggl\{\frac{|b(s,Z(s,z))|+|b(s,\wbar{Z}(s,z))|}{\delta},\\
   &  \mkern 150mu\frac{|b_2(s,X(s,z))-b_2(s,\wbar X(s,z))|}{|X(s,z)-\wbar X(s,z)|} \biggr\}dz.
  \end{aligned} 
\end{equation}
Using assumption {\bf(R2a)}, we can use the estimate of \cite{BC} on the difference quotients of $b_2$,
\begin{equation}
	\frac{|b_2(s,X(s,z))-b_2(s,\wbar X(s,z))|}{|X(s,z)-\wbar X(s,z)|}
	\leq \U(s,X(s,z))+\U(s,\wbar X(s,z)),
	\label{eq:diffquot}
\end{equation}
where $\U(s,.)\in M^1(\IR^N)$ for fixed $s$ is indeed given by
\begin{equation}
	\U(s,\cdot)=\sum_{j=1}^N\sum_{k=1}^mM_j(S_{jk}g_{jk}(s,.)),
	\label{eq:defUmax}
\end{equation}
with $M_j$ a smooth maximal operator on $\IR^N$.
Next, we can define the function
\begin{equation}
	\Z(s,x,v)=\U(s,x)\1_{(x,v)\in \wbar{B_{\lambda}}},
	\label{eq:defZmax}
\end{equation}
and we notice that since the above integrals are over $G_\lambda \cap \wbar{G}_\lambda$, we can
replace the right-hand side of \eqref{eq:diffquot} by $\Z(s,Z(s,z))+\Z(s,\wbar Z(s,z))$.
Then, for given $\varepsilon>0$, we can decompose
$g_{jk}=g_{jk}^1+g_{jk}^2$, with $\|g_{jk}^1\|_{L^1((0,T)\times\IR^N)}\leq \varepsilon$ and
$\|g_{jk}^2\|_{L^2((0,T)\times\IR^N)}\leq C_\varepsilon$, $g_{jk}^2$ having support in a
set $A_\varepsilon$ of finite measure.
This gives rise to two type of terms $\Z^1$ and $\Z^2$.
Since all the $\Z$ terms have compact support in $v$, this allows to perform the same
estimates as in \cite{BC}. We finally get
\begin{equation}
     \begin{aligned}
& \La^{2N}(B_r\cap\{|Z(\tau,\cdot)-\wbar{Z}(\tau,\cdot)|>\gamma\}) \\
 \leq & \frac{1}{\log(1+\gamma/\delta)}\int_t^\tau\Phi_\delta'(s)\,ds
 +\La^{2N}(B_r\setminus G_\lambda)+ \La^{2N}(B_r\setminus \wbar{G}_\lambda) .
     \end{aligned} 
\end{equation}
By choosing $\lambda$ large first, then $\varepsilon$ small, and finally $\delta$ small,
we conclude the proof as in \cite{BC}.
\end{proof}

\section{Control of superlevels}

In order to apply Theorem \ref{Th estimatestab}, we need to satisfy {\bf(R1a)}.
Therefore, we seek an upper bound on the size of $B_r\setminus G_\lambda.$

\subsection{The case of low space dimension} 

We first recall the following lemma from \cite{BC}.
\begin{lemma}
Let $b:(0,T)\times\IR^{2N}\rightarrow \IR^{2N}$ be a vector field satisfying  {\bf (R1)}.
Then $b$ satisfies {\bf(R1a)}, where the function $g(r,\lambda)$ depends only on $L,||\Tilde{b}_1||_{L^1((0,T);L^1(\IR^{2N}))},||\Tilde{b}_2||_{L^1((0,T);L^\infty(\IR^{2N}))}$.
\end{lemma}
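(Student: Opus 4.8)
The plan is to deduce the sublevel bound \eqref{r1a} from the structural assumption {\bf(R1)} by showing that the flow cannot travel too far in time, quantitatively, with a bound depending only on $L$ and the norms of the two pieces $\tilde b_1,\tilde b_2$. First I would fix a regular Lagrangian flow $Z(\cdot,z)$ for $b$ on $[t,T]$ with compression constant $L$, and estimate the growth of $|Z(s,z)|$ along an integral curve. Using $\dot Z(s,z)=b(s,Z(s,z))$ together with the decomposition $b(s,w)/(1+|w|)=\tilde b_1(s,w)+\tilde b_2(s,w)$, one gets
\[
\frac{d}{ds}\log(1+|Z(s,z)|)\le \frac{|b(s,Z(s,z))|}{1+|Z(s,z)|}\le |\tilde b_1(s,Z(s,z))|+|\tilde b_2(s,Z(s,z))|,
\]
so that, integrating in $s$ from $t$,
\[
\log(1+|Z(s,z)|)\le \log(1+|z|)+\int_t^T|\tilde b_1(\sigma,Z(\sigma,z))|\,d\sigma+\|\tilde b_2\|_{L^1((0,T);L^\infty)}.
\]
Denote by $h(z):=\int_t^T|\tilde b_1(\sigma,Z(\sigma,z))|\,d\sigma$ the ``bad'' term; the point is that $h$ is small except on a small set.

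The key step is controlling $h$. Integrating $h$ over the ball $B_r$ and using the pushforward bound from Definition \ref{flow}(2), namely $\La^{2N}(Z(\sigma,\cdot)^{-1}(A))\le L\La^{2N}(A)$, gives
\[
\int_{B_r}h(z)\,dz=\int_t^T\!\!\int_{B_r}|\tilde b_1(\sigma,Z(\sigma,z))|\,dz\,d\sigma\le L\int_t^T\|\tilde b_1(\sigma,\cdot)\|_{L^1(\IR^{2N})}\,d\sigma=L\,\|\tilde b_1\|_{L^1((0,T);L^1(\IR^{2N}))}.
\]
Hence by Chebyshev, for any threshold $M>0$, the set $E_M:=\{z\in B_r: h(z)>M\}$ has measure at most $L\|\tilde b_1\|_{L^1(L^1)}/M$. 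Now if $z\in B_r\setminus E_M$, then for almost every $s\in[t,T]$,
\[
\log(1+|Z(s,z)|)\le\log(1+r)+M+\|\tilde b_2\|_{L^1((0,T);L^\infty)}=:\log(1+\lambda_0(M)),
\]
i.e.\ $z\in G_{\lambda_0(M)}$. Equivalently, $B_r\setminus G_{\lambda_0(M)}\subseteq E_M$, so $\La^{2N}(B_r\setminus G_{\lambda_0(M)})\le L\|\tilde b_1\|_{L^1(L^1)}/M$. Finally, given $\lambda$ one solves $\lambda_0(M)=\lambda$ for $M=M(\lambda)=\log(1+\lambda)-\log(1+r)-\|\tilde b_2\|_{L^1(L^\infty)}$, which tends to $+\infty$ as $\lambda\to\infty$ at fixed $r$, and sets
\[
g(r,\lambda):=\frac{L\,\|\tilde b_1\|_{L^1((0,T);L^1(\IR^{2N}))}}{\log(1+\lambda)-\log(1+r)-\|\tilde b_2\|_{L^1((0,T);L^\infty(\IR^{2N}))}}
\]
for $\lambda$ large enough (and, say, $g(r,\lambda):=\La^{2N}(B_r)$ for small $\lambda$), which visibly goes to $0$ as $\lambda\to\infty$ and depends only on the stated quantities. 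This establishes {\bf(R1a)}.

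The main obstacle is purely a matter of care rather than depth: one must justify the differential inequality along almost every characteristic (the curves are only absolutely continuous, so the chain rule for $\log(1+|\cdot|)$ is applied in the a.e./Sobolev sense, which is legitimate since $w\mapsto\log(1+|w|)$ is Lipschitz) and, more importantly, one must handle the measurability and the application of the pushforward inequality to the integrand $(\sigma,z)\mapsto|\tilde b_1(\sigma,Z(\sigma,z))|$, using Fubini together with Definition \ref{flow}(2) applied slice-by-slice in $\sigma$. One should also note that the bound $\lambda_0(M)$ holds for \emph{almost every} $s$, matching exactly the definition of $G_\lambda$ in \eqref{eq:sublevel}, so no extra argument is needed there. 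Everything else is elementary.
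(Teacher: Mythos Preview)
Your argument is correct and is essentially the standard proof from \cite{BC}: the paper itself does not give a proof of this lemma but merely recalls it from that reference, and the approach there is exactly the one you outline (differentiate $\log(1+|Z|)$ along the flow, use the decomposition from {\bf(R1)}, control the $\tilde b_1$ contribution in $L^1(B_r)$ via the compression constant, then apply Chebyshev).
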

This lemma allows us to control the superlevels of $b$ in $1$ or $2$ dimensions.
\begin{propos}\label{estsuper2}
Let $b$ be the vector field in \eqref{bvf}, with $E\in L^\infty((0,T);L^2(\IR^N))$.
For $N=2$ or $N=1$, $b$ satisfies {\bf (R1)}, hence also {\bf(R1a)}.
\end{propos}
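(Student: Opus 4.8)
The plan is to verify the two conditions of {\bf(R1)} for the vector field $b=(v,E(t,x))$ directly, exploiting the dimensional restriction $N\le 2$. Recall that {\bf(R1)} requires a splitting
$$
\frac{b(t,z)}{1+|z|}=\tilde b_1(t,z)+\tilde b_2(t,z),\qquad \tilde b_1\in L^1((0,T);L^1(\IR^{2N})),\ \tilde b_2\in L^1((0,T);L^\infty(\IR^{2N})).
$$
The $b_1=v$ part is harmless: $v/(1+|z|)$ is bounded, so it contributes to $\tilde b_2$. The whole difficulty is the field $E$, for which I would use the assumption $E\in L^\infty((0,T);L^2(\IR^N))$ — this is exactly where $N\le 2$ enters, since in low dimensions finite energy forces $E$ to be genuinely $L^2$ (as remarked after \eqref{conservationenergy}), whereas in $N=3$ one only has $E^0\in L^2$ from $\rho^0-\rho_b\in L^{6/5}$, not from $L^1$ alone.

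The key step is to split $E$ according to the size of its argument. Fix a radius, say $|x|\le 1$ versus $|x|>1$. Write $E=E\1_{|x|\le 1}+E\1_{|x|>1}$. On the inner region, $E\1_{|x|\le 1}\in L^\infty((0,T);L^2(\IR^N))$ with bounded support in $x$; since $N\le 2$ we have $L^2$ on a bounded set embedded in $L^1$ on that set, so $E\1_{|x|\le 1}/(1+|z|)$ (which is also supported in a bounded set in $x$, arbitrary in $v$ — but here one must be a little careful) contributes to $\tilde b_1$ after noting $1/(1+|z|)\le 1$. For the outer region $|x|>1$, I would use the pointwise bound from \eqref{kern}/\eqref{field}, $|E(t,x)|\le c_N |x|^{1-N}*|\rho-\rho_b|$, which together with $\|\rho-\rho_b\|_{L^1}\le C$ gives, for $|x|$ large, a decay like $|E(t,x)|\lesssim |x|^{1-N}$; dividing by $1+|z|\ge |x|$ yields something like $|x|^{-N}$ for $|x|>1$, which is in $L^1(\{|x|>1\})$ — and again bounded in $v$, so one splits once more: the part of this bound that is $L^1$ in $x$ goes to $\tilde b_1$ up to the $v$-dependence. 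The honest bookkeeping: write $\frac{E(t,x)}{1+|z|}=\frac{E(t,x)}{1+|x|}\cdot\frac{1+|x|}{1+|z|}$ and note $\frac{1+|x|}{1+|z|}\le 1$, reducing everything to showing $\frac{E(t,x)}{1+|x|}\in L^\infty((0,T);L^1(\IR^N_x))$ — then, since this function is independent of $v$, one cannot directly put it in $L^1(\IR^{2N})$, so instead one decomposes it as (bounded, compactly supported in $x$) $+$ ($L^1$ in $x$), the first piece giving $\tilde b_2\in L^1(L^\infty)$ and the second giving $\tilde b_1$ only after multiplying by a further cutoff — no, more simply: the decomposition of $b$ need not have $\tilde b_1,\tilde b_2$ independent of $v$, and we are free to use the actual $z$-dependence. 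Concretely I would set $\tilde b_2=\frac{1}{1+|z|}(v,\,E\1_{|x|\le 1})$ and $\tilde b_1=\frac{1}{1+|z|}(0,\,E\1_{|x|>1})$; then $\tilde b_2\in L^1((0,T);L^\infty)$ because $|v|/(1+|z|)\le 1$ and $|E\1_{|x|\le1}|/(1+|z|)\le |E\1_{|x|\le1}|$, and $\|E(t,\cdot)\1_{|x|\le1}\|_{L^\infty_x}$ need not be finite — so one must instead cut $E$ into $\{|E|\le \Lambda\}$ and $\{|E|>\Lambda\}$, the first bounded and the second with small measure by Chebyshev from the $L^2$ bound.

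The cleanest route, which I would ultimately carry out, is therefore the two-parameter truncation: for thresholds to be chosen, write $E=E\1_{|E|\le\Lambda}+E\1_{|E|>\Lambda}$. The bounded part divided by $1+|z|$ is in $L^1(L^\infty)$ (it is bounded by $\Lambda$), giving $\tilde b_2$. For the unbounded part, on $\{|E|>\Lambda\}$: its $L^1(\IR^N_x)$ norm is controlled via Hölder and Chebyshev, $\int_{|E|>\Lambda}|E|\,dx\le \|E\|_{L^2}\,\La^N(\{|E|>\Lambda\})^{1/2}\le \|E\|_{L^2}^2/\Lambda$, hence this piece is in $L^\infty((0,T);L^1(\IR^N_x))$; dividing by $1+|z|$ and recalling $N\le2$ does not by itself produce $L^1(\IR^{2N})$ because of the free $v$ variable, so the final maneuver is to also truncate in $v$: the contribution from large $|v|$ has $|E|/(1+|z|)\le |E|/|v|$ small, and the contribution from bounded $|v|$ combines the $L^1_x$ bound with the finite $v$-measure of a ball to land in $L^1(\IR^{2N})$. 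The main obstacle, then, is not any deep estimate but this matching of the $v$-variable: $E$ depends only on $x$, so to place the ``$L^1$ in $x$'' piece of $E/(1+|z|)$ into $L^1(\IR^{2N})$ one genuinely needs the decay $1/(1+|z|)\sim 1/|v|$ for large $|v|$ to make the $v$-integral converge, and one must organize the truncations (in $|E|$, in $|x|$, in $|v|$) so that every leftover term is either globally bounded with integrable decay or compactly supported — getting a consistent single splitting out of this is the fiddly part. Once {\bf(R1)} holds, {\bf(R1a)} follows immediately from the Lemma recalled just above.
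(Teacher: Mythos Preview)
Your proposal never actually closes. The obstacle you correctly flag---that $E$ depends only on $x$ while you need a splitting in $\IR^{2N}$---cannot be handled with \emph{fixed} truncation levels. If you cut at $|E|\le\Lambda$ and then at $|v|\le M$, the leftover region $\{|E|>\Lambda,\ |v|>M\}$ produces $|E|/(1+|z|)$ which is neither bounded (no upper bound on $|E|$ there) nor in $L^1(\IR^{2N})$ (since $\int_{|v|>M}|v|^{-1}\,dv=\infty$ for $N=1,2$). Your earlier attempt via pointwise decay, ``$|E(t,x)|\lesssim |x|^{1-N}$ for large $|x|$'', is also unfounded: the convolution bound $|E|\le c_N|x|^{1-N}*|\rho-\rho_b|$ yields only the weak-$L^{N/(N-1)}$ estimate \eqref{kern}, not pointwise decay, for general $L^1$ densities.

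The missing idea is to make the $v$-threshold depend on $|E(t,x)|$. The paper sets
\[
\frac{E(t,x)}{1+|x|+|v|}=\frac{E}{1+|x|+|v|}\,\1_{|v|\le|E|}+\frac{E}{1+|x|+|v|}\,\1_{|v|>|E|}\equiv \tilde E_1+\tilde E_2.
\]
On $\{|v|>|E|\}$ one has $|\tilde E_2|\le |E|/(1+|v|)<1$, so $\tilde E_2\in L^\infty$. For $\tilde E_1$ in $N=2$,
\[
\iint \frac{|E|}{1+|x|+|v|}\,\1_{|v|\le|E|}\,dx\,dv\le \int|E(t,x)|\Bigl(\int_{|v|\le|E|}\frac{dv}{|v|}\Bigr)dx=2\pi\int|E(t,x)|^2\,dx,
\]
which puts $\tilde E_1\in L^\infty((0,T);L^1)$; for $N=1$ one simply notes $E/(1+|v|)\in L^\infty_t L^2_{x,v}\subset L^1+L^\infty$. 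The adaptive cut $\{|v|\le|E|\}$ is precisely what converts the divergent $v$-integral into a factor of $|E|$, closing the estimate with $\|E\|_{L^2}^2$.
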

\begin{proof} It is clear that
\begin{equation}
	\frac{v}{1+|x|+|v|}\in L^\infty_t(L^\infty_{x,v}),
\end{equation}
and
\begin{equation}
\frac{E(t,x)}{1+|x|+|v|}=\frac{E(t,x)}{1+|x|+|v|}\1_{|v|\leq|E(t,x)|} +\frac{E(t,x)}{1+|x|+|v|}\1_{|v|>|E(t,x)|}\equiv \tilde{E_1}+\tilde{E_2}.
\end{equation}
Clearly $\tilde{E_2}\in L^\infty_t(L^\infty_{x,v})$, and if $N=2$, $\tilde{E_1}\in L^\infty((0,T);L^1_{x,v})$ since
\begin{equation}\begin{array}{l}
	\displaystyle \iint\limits_{\IR^2\times\IR^2}\frac{|E(t,x)|}{1+|x|+|v|}\1_{|v|\leq|E(t,x)|}dxdv\\
	\displaystyle\leq \int\limits_{\IR^2} |E(t,x)|\biggl(\int\limits_{|v|\leq |E(t,x)|}\frac{1}{|v|}dv\biggr)dx
	= 2\pi\int\limits_{\IR^2} |E(t,x)|^2dx.
	\end{array}
\end{equation}
In the case $N=1$, we have directly that $E(t,x)/(1+|v|)\in L^\infty((0,T);L^2_{x,v})$.
\end{proof}

\subsection{The case of three space dimensions}

The condition {\bf (R1)} being not satisfied in $3$ dimensions,
we need an estimate on $|Z|$ in order to control the superlevels.
For getting this we integrate in space a function growing slower at infinity than $\log(1+|Z|)$ (this corresponding to the case {\bf(R1)}).
\begin{propos}\label{estsuper3}
Let $b$ be as in \eqref{bvf} with $N=3$, $E\in L^\infty((0,T);L^2(\IR^N))$, satisfying \eqref{eq:dtEdivj}
with $\curr\in L^\infty((0,T);L^1(\IR^N))$. Furthermore, assume that $\omega=+1$, $\rho\geq 0$, and $\rho_b\in L^1\cap L^p(\IR^3)$ for some $p>3/2$.
Then {\bf(R1a)} holds, where the function $g$ depends only
on $L$, $T$, $||E||_{L^\infty_t(L^2_x)}$, $||\curr||_{L^\infty_t(L^1_x)}$, $\|(-\Delta)^{-1}\rho_b\|_{L^\infty}$,
and one has $g(r,\lambda)\to 0$ as $\lambda\to \infty$ at fixed $r$. 
\end{propos}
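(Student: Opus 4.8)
The plan is to establish a pointwise-in-space control on the growth of a suitable nonnegative function along the flow, weaker than $\log(1+|Z|)$ but still tending to $+\infty$, and to integrate it against the (compressed) Lebesgue measure. Concretely, fix a regular Lagrangian flow $Z=(X,V)$ for $b$ starting at time $t$, with compression constant $L$. For a parameter $\alpha\in(0,1)$ to be chosen, consider the quantity
\begin{equation}
	\Psi_\alpha(s)=\int_{B_r}\bigl(1+|Z(s,z)|^2\bigr)^{\alpha/2}\,dz,
\end{equation}
or rather its truncated/localized version to keep it finite. Differentiating along the characteristics and using $\dot X=V$, $\dot V=E(s,X)$ one gets, for a.e. $z$,
\begin{equation}
	\frac{d}{ds}\bigl(1+|Z(s,z)|^2\bigr)^{\alpha/2}\leq \alpha\bigl(1+|Z|^2\bigr)^{(\alpha-2)/2}\bigl(|X||V|+|V|\,|E(s,X)|\bigr)\leq \alpha\bigl(1+|Z|^2\bigr)^{(\alpha-1)/2}\bigl(|V|+|E(s,X(s,z))|\bigr).
\end{equation}
The term with $|V|$ is harmless: it is bounded by $\alpha(1+|Z|^2)^{\alpha/2}$, which after Gronwall costs only a factor $e^{\alpha T}$. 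The real issue is the forcing term $|E(s,X(s,z))|$.

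The key step is to absorb $E$ using the potential energy bound together with the repulsive sign $\omega=+1$. Write $E=-\nabla_x U$ with $-\Delta_x U=\rho-\rho_b$, so $U=(-\Delta)^{-1}(\rho-\rho_b)=(-\Delta)^{-1}\rho-(-\Delta)^{-1}\rho_b$. Since $\rho\geq0$ and $N=3$, the Newtonian potential $(-\Delta)^{-1}\rho$ is nonnegative, and $(-\Delta)^{-1}\rho_b\in L^\infty$ by the assumption $\rho_b\in L^1\cap L^p$, $p>3/2$ (Sobolev/Young). The heuristic is the classical identity relating $\int |E(s,X(s,z))|\,$-type quantities to $\frac{d}{ds}\iint U(s,X)f\,$; more robustly, one uses that $|E|^2$ has a uniform $L^1_x$ bound from \eqref{conservationenergy} and splits
\begin{equation}
	\int_{B_r}\bigl(1+|Z|^2\bigr)^{(\alpha-1)/2}|E(s,X(s,z))|\,dz\leq \int_{B_r}\bigl(1+|Z|^2\bigr)^{(2\alpha-1)/2}\frac{dz}{1+|X(s,z)|}\ \cdot\ (\text{something})+\frac{1}{2}\int_{B_r}|E(s,X(s,z))|^2\,dz,
\end{equation}
where the last integral is $\leq L\,\|E(s,\cdot)\|_{L^2_x}^2\leq C L$ by the compression bound. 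Choosing $\alpha<1/2$ makes the exponent $(2\alpha-1)/2$ negative, so the first integral is bounded by $\Psi_\alpha$-type quantities (or is outright bounded), and Gronwall in $s\in[t,T]$ then yields
\begin{equation}
	\Psi_\alpha(s)\leq C\bigl(r,T,L,\|E\|_{L^\infty_t L^2_x},\|\curr\|_{L^\infty_t L^1_x},\|(-\Delta)^{-1}\rho_b\|_{L^\infty}\bigr)\quad\text{for all }s\in[t,T].
\end{equation}
From a uniform bound on $\Psi_\alpha$ one deduces, by Chebyshev, that for a.e. $z\in B_r$ the set of times where $|Z(s,z)|>\lambda$ is small, and more precisely that $\La^{2N}(B_r\setminus G_\lambda)\leq C(r)\lambda^{-\alpha}\to0$ as $\lambda\to\infty$ — possibly after first passing through the intermediate ``most of the time'' sublevel and then upgrading to the genuine sublevel $G_\lambda$ as in \cite{BC}. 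This is exactly {\bf(R1a)}.

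The main obstacle is the forcing term $|E(s,X(s,z))|$: unlike in the low-dimensional case, $E/(1+|z|)$ does not decompose as $L^1+L^\infty$, so one cannot treat it as a source in the $L^1$-vs-$L^\infty$ splitting. The resolution must genuinely exploit (i) the repulsive sign, which gives a one-sided (nonnegativity) bound on the Newtonian part of $U$ and hence a lower bound controlling the growth of $|X|$ via energy, and (ii) the sublinear weight $(1+|Z|^2)^{\alpha/2}$ with $\alpha$ small, which is integrable against $|E|$ after a Hölder split using only $\|E\|_{L^2_x}$ and the compression constant $L$. Care is needed to keep $\Psi_\alpha$ finite at intermediate steps (truncate in $|z|$ or work on $B_r$ with the a priori finiteness coming from the flow being defined a.e. and $L$-compressible), and to make all constants depend only on the quantities listed in the statement; I expect the bookkeeping of the $\varepsilon$-splitting of $g_{jk}$-type terms not to recur here, the only singular-integral input being the already-available energy bound on $E$.
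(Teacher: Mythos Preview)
Your plan has a genuine gap at the key step. The claim that
\[
\int_{B_r}|E(s,X(s,z))|^2\,dz\ \leq\ L\,\|E(s,\cdot)\|_{L^2_x}^2
\]
is false: the compression bound gives $\int_{\IR^{2N}}\phi(Z(s,z))\,dz\leq L\int_{\IR^{2N}}\phi(w)\,dw$, but with $\phi(x,v)=|E(s,x)|^2$ the right-hand side is $\|E(s,\cdot)\|_{L^2_x}^2\cdot\int_{\IR^3}dv=+\infty$. Restricting to $B_r$ in the $z$-variable does not help, since you have no a~priori control on where $V(s,z)$ lives --- that is precisely what you are trying to prove. The same obstruction kills any H\"older/Young splitting against the polynomial weight $(1+|Z|^2)^{(\alpha-1)/2}$: for the $v$-integral of $(1+|v|^2)^{p(\alpha-1)/2}$ to converge one needs $p(1-\alpha)>3$, which forces $\alpha<0$ even with $p=3$. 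A power weight on $|Z|$ is simply too much to ask for here.

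What the paper does instead is structurally different in three ways. First, it works with a function of $|V|$ alone (recovering $|X|$ afterwards via $\dot X=V$), and uses the Hamiltonian identity
\[
\partial_s\tfrac{|V|^2}{2}=-\partial_s\bigl[U(s,X)\bigr]+\partial_t U(s,X),
\]
so that after one integration by parts in $s$ the forcing appears only as a boundary term $-U(s,X)\beta'(|V|^2/2)$ plus time-integrated terms involving $U\,V\cdot E\,\beta''$ and $\partial_t U\,\beta'$. The assumption $\curr\in L^\infty_t L^1_x$ (which you list in your constant but never use) enters exactly here, through $\partial_t U=-(-\Delta_x)^{-1}\dive_x\curr\in L^\infty_t M^{3/2}_x$. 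Second, the repulsive sign is used precisely on that boundary term: $U\geq -\|(-\Delta)^{-1}\rho_b\|_{L^\infty}$ gives a one-sided $L^\infty$ bound on $-U\beta'$ at time $s$, which is the only place the sign condition actually does work. Third, the weight must be logarithmic, $\beta(y)=(1+\log(1+y))^\alpha$ with $\alpha<1/3$: the decay of $\beta'(|v|^2/2)$ like $(1+|v|^2)^{-1}(\log)^{\alpha-1}$ is just barely integrable in $\IR^3_v$ in $L^{3/2}$ (so as to match $\partial_t U\in M^{3/2}_x$), and a power weight fails this test. Your intuition about needing a sublinear functional and exploiting the sign is correct, but the mechanism goes through the potential $U$ and its time derivative, not through $|E|$ directly.
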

\begin{proof}
Step 1.)  Let $Z:[t,T]\times\IR^3\times\IR^3\rightarrow\IR^3\times\IR^3$ be a regular Lagrangian flow relative to $b$
starting at time $t$, with compression constant $L$ and sublevel $G_\lambda$.
Denoting $Z=(X,V)$, we have the ODEs
\begin{align}
\begin{cases}
&\dot{X}(s,x,v)=V(s,x,v), \\
&\dot{V}(s,x,v)=E(s,X(s,x,v)).
\end{cases}
\label{eq:edos}
\end{align}
Recalling that $E=-\nabla_x U$, one has
\begin{align}\label{energydecomp}
&\p_s\frac{|V(s,x,v)|^2}{2}=V(s,x,v)\cdot\p_s V(s,x,v) = E(s,X(s,x,v))\cdot\p_s X(s,x,v) \nonumber \\
&= -\p_s[U(s,X(s,x,v))]+\p_t U(s,X(s,x,v)).
\end{align}
This computation is indeed related to the form of the Hamiltonian for \eqref{vlasov}, ${\mathcal H}=|v|^2/2+U(t,x)$.

We are going to bound the superlevels of $V(s,x,v)$. We claim that
\begin{equation}
	\iint_{B_r} \sup_{s\in[t, T]}\left(1+\log\Bigl(1+|V(s,x,v)|^2/2\Bigr)\right)^\alpha dxdv\leq  A,
\end{equation}
where $0<\alpha<1/3$, and for some constant $A$ depending on $L$, $T$, $r$, $\alpha$, and on the norms
$||E||_{L^\infty_t(L^2_x)}$, $||\curr||_{L^\infty_t(L^1_x)}$, $\|(-\Delta)^{-1}\rho_b\|_{L^\infty}$.
Assume for the moment that this holds. From the lower bound
\begin{equation}\begin{array}{l}
	\displaystyle\hphantom{\geq\ } \iint_{B_r} \sup_{s\in[t, T]}\left(1+\log\Bigl(1+|V(s,x,v)|^2/2\Bigr)\right)^\alpha dxdv\\
	\displaystyle\geq \La^6(B_r\setminus \widetilde G_\lambda) (1+\log(1+\lambda^2/2))^\alpha,
	\end{array}
\end{equation}
with $\widetilde G_\lambda$ the sublevel of $V$, we get that  
\begin{equation}
 \La^6 (B_r\setminus \widetilde G_\lambda)\leq \frac{A}{(1+\log(1+\lambda^2/2))^\alpha}.
\end{equation}
Next, we remark that by the first equation in \eqref{eq:edos}, whenever $(x,v)\in \widetilde G_\lambda$ one has
$|X(s,x,v)|\leq |x|+|s-t|\lambda$, and $|Z(s,x,v)|\leq |x|+(1+T)\lambda$.
Thus for $\lambda>r$, one has $B_r\setminus G_\lambda\subset B_r\setminus \widetilde G_{(\lambda-r)/(1+T)}$,
which enables to conclude the proposition (for $\lambda\leq r$ we can just bound
$\La^6(B_r\setminus G_\lambda)$ by $\La^6(B_r)$).
\\
Step 2.) By Step 1, it is enough to prove that we have a decomposition
 \begin{equation}\label{claim}
 \left(1+\log\left(1+\frac{ |V(s,\cdot,\cdot)| ^2}{2}\right)\right)^\alpha\leq f_1+f_2\in L^1 (\IR^3_x\times\IR^3_v)+L^\infty (\IR^3_x\times\IR^3_v),
 \end{equation}
for $(x,v)\in B_r$, where $f_1$, $f_2$ are independent of $s\in[t,T]$.
Let
\begin{equation}
	\beta(y)= \left(1+\log\left(1+y\right)\right)^\alpha,\quad\mbox{for }y\geq 0.
	\label{eq:defbeta}
\end{equation}
Then
\begin{equation}\begin{array}{c}
	\displaystyle \beta'(y)=\frac{\alpha(1+\log(1+y))^{\alpha-1}}{1+y},\\
	\displaystyle 0<-\beta''(y)\leq \frac{(1+\log(1+y))^{\alpha-1}}{(1+y)^2}.
	\label{eq:boundsbetaprime}
	\end{array}
\end{equation}
Using \eqref{energydecomp}, we compute
 \begin{equation}
 \begin{aligned}
 & \p_s\left[\beta\left(\frac{|V(s,x,v)|^2}{2}\right)\right] \\
  = &\Bigl(-\p_s[U(s,X(s,x,v))]+\p_t U(s,X(s,x,v)) \Bigr)\beta'\left(\frac{|V(s,x,v)|^2}{2}\right)\\
  = & -\p_s\left[U(s,X(s,x,v))\beta'\left(\frac{|V(s,x,v)|^2}{2}\right)\right] \\\
&+ U(s,X(s,x,v))\beta''\left(\frac{|V(s,x,v)|^2}{2}\right)V(s,x,v)\cdot E(s,X(s,x,v))\\
& + \p_t U(s,X(s,x,v))  \beta'\left(\frac{|V(s,x,v)|^2}{2}\right).
 \end{aligned}
  \end{equation}
Thus, integrating between $t$ and $s$,
\begin{equation}\label{int}
\begin{aligned}
 & \left(1+\log\left(1+\frac{ |V(s,x,v)| ^2}{2}\right)\right)^\alpha \\
= & - \frac{\alpha U(s,X(s,x,v))}{\left(1+\frac{|V(s,x,v)|^2}{2}\right)\left(1+\log\left(1+\frac{|V(s,x,v)|^2}{2}\right)\right)^{1-\alpha}} \\
 & + \frac{\alpha U(t,x)}{\left(1+\frac{|v|^2}{2}\right)\left(1+\log\left(1+\frac{|v|^2}{2}\right)\right)^{1-\alpha}} +  \left(1+\log\left(1+\frac{ {|v|^2}}{2}\right)\right)^{\alpha} \\
 & + \int^s_t \left\{ U(\tau,X(\tau,x,v))V(\tau,x,v)\cdot E(\tau,X(\tau,x,v))   \beta''\left(\frac{|V(\tau,x,v)|^2}{2}\right)
    \right. \\
 & \left.  + \p_t U(\tau,X(\tau,x,v)) \beta'\left(\frac{|V(\tau,x,v)|^2}{2}\right)   \right\}d\tau.
\end{aligned}
\end{equation}
Step 3.)
Since $E(t,\cdot)\in L^2(\IR^3)$, we have by the Sobolev embedding that 
$U(t,\cdot)  \in L^6(\IR^3)$. Thus clearly
\begin{equation}
\frac{U(t,x)}{1+\frac{|v|^2}{2}} \in  L^6(\IR^3_x\times\IR^3_v)\subset  L^1 (\IR^3_x\times\IR^3_v)+L^\infty (\IR^3_x\times\IR^3_v).
\end{equation}
Next, since $\omega=+1$ and $\rho\geq 0$, one has $U=U_{\rho}-U_{\rho_b}$, with $U_{\rho}\geq 0$.
Thus $U\geq -\|U_{\rho_b}\|_{L^\infty}$.
Thus the first three terms in the expansion \eqref{int} are upper bounded in $L^1 (\IR^3_x\times\IR^3_v)+L^\infty (\IR^3_x\times\IR^3_v)$.
It remains to estimate the integral. We can bound it by $\Phi_1+\Phi_2$, with
\begin{equation}\label{iint1}
  \Phi_1:= \int^T_t  \left| U(\tau,X(\tau,x,v))V(\tau,x,v)\cdot E(\tau,X(\tau,x,v))
	\beta''\left(\frac{|V(\tau,x,v)|^2}{2}\right)\right| d\tau,
\end{equation}
\begin{equation}\label{iint2}
  \Phi_2:= \int^T_t    \left|\p_t U(\tau,X(\tau,x,v)) \beta'\left(\frac{|V(\tau,x,v)|^2}{2}\right)\right| d\tau.
\end{equation}
Note that $\Phi_1$, $\Phi_2$ are independent of $s$.
We estimate $\Phi_1$ in $L^{3/2}(\IR^3_x\times\IR^3_v)\subset L^1 (\IR^3_x\times\IR^3_v)+L^\infty (\IR^3_x\times\IR^3_v)$.
Passing the $L^{3/2}$ norm under the integral and
changing $(X(\tau,x,v),V(\tau,x,v))$ to $(x,v)$, this gives (up to a factor $L$)
\begin{align*}
&  \int\limits_{\IR^3}\int\limits_{\IR^3} \left|  \frac{U(\tau,x) v \cdot E(\tau,x)    }{\left(1+\frac{|v|^2}{2}\right)^2\left(1+\log\left(1+\frac{|v|^2}{2}\right) \right)^{1-\alpha}  }\right|^{3/2}  dxdv \\
& \leq  
 \int\limits_{\IR^3}|U(\tau,x)   E(\tau,x)|^{3/2} dx     \int\limits_{\IR^3}  \frac{2^{3/4}\,dv }{\left(1+\frac{|v|^2}{2}\right)^{9/4}\left(1+\log\left(1+\frac{|v|^2}{2}\right) \right)^{3(1-\alpha)/2}  } \\
 & \leq c ||U(\tau,\cdot)||_{L^6(\IR^3)}^{3/2}||E(\tau,\cdot)||_{L^2(\IR^3)}^{3/2}.
\end{align*}
Thus $\Phi_1\in  L^{3/2}(\IR^3_x\times\IR^3_v)$. 
\\
Step 4.) For $\Phi_2$, we notice that $E$ satisfies \eqref{eq:dtEdivj} and $E=-\nabla_x U$, thus
\begin{equation}
  \p_t U = -\omega(-\Delta_x)^{-1}\dive_x\curr.
\end{equation}
Since $\curr(\tau,\cdot)\in L^1(\IR^3_x)$, we deduce by the Hardy Littlewood Sobolev inequality that
\begin{equation}
|||\p_t U(\tau,\cdot)|||_{M^{3/2}(\IR^3)}\leq c||\curr(\tau,\cdot)||_{L^1(\IR^3)}.
\end{equation}
Therefore, we estimate
\begin{equation}\label{phi2}
\begin{aligned}
& \left|\left|\left|\frac{\p_t U(\tau,X(\tau,x,v))}{\left(1+\frac{|V(\tau,x,v)|^2}{2}\right)\left(1+\log\left(1+\frac{|V(\tau,x,v)|^2}{2}\right)\right)^{1-\alpha}}\right|\right|\right|_{M^{3/2}(\IR^3_x\times\IR^3_v)} \\
& \leq       L^{2/3}\left|\left|\left| \frac{\p_t U(\tau,x)}{\left(1+\frac{|v|^2}{2}\right)\left(1+\log\left(1+\frac{|v|^2}{2}\right)\right)^{1-\alpha}}   \right|\right|\right|_{M^{3/2}(\IR^3_x\times\IR^3_v)}\\
&  \leq L^{2/3}\left|\left|\left| \frac{\p_t U(\tau,x)}{\left(1+\frac{|v|^2}{2}\right)\left(1+\log\left(1+\frac{|v|^2}{2}\right)\right)^{1-\alpha}   }   \right|\right|\right|_{L^{3/2}(\IR_v^3;M^{3/2}(\IR_x^3))} \\
&  \leq L^{2/3}|||\p_t U(\tau,\cdot)|||_{M^{3/2}(\IR_x^3)}\left(\int\limits_{\IR^3}    \frac{dv}{ \left(1+\frac{|v|^2}{2}\right)^{3/2} \left(1+\log\left(1+\frac{|v|^2}{2}\right)\right)^{3(1-\alpha)/2}}\right)^{2/3} \\
 & \leq c||\curr(\tau,\cdot)||_{L^1(\IR^3)},
\end{aligned}
\end{equation}
where the last integral is convergent since $3(1-\alpha)/2>1$.
{}From the inclusion $M^{3/2}(\IR^3_x\times\IR^3_v)\subset  L^1 (\IR^3_x\times\IR^3_v)+L^\infty (\IR^3_x\times\IR^3_v)$,
and integrating \eqref{iint2} over $B_r$, we get \eqref{claim} as desired.
\end{proof}

\section{Renormalized solutions and Lagrangian solutions}

We recall the different notions of weak solutions for the Vlasov-Poisson system.
We shall always assume that $1\leq N\leq 3$, and
we consider an initial datum $f^0 \in L^1(\irnxv)$, $f^0\geq 0$.
We introduce first renormalized solutions, following \cite{DPLVP,DPLkin}.
\begin{defin} We say that $f\in L^\infty((0,T);L^1(\irnxv))$, $f\geq 0$,
is a solution to the Vlasov equation \eqref{vlasov} in the renormalized sense if for all test functions $\beta\in C^1([0,\infty))$ with $\beta$ bounded, we have that
\begin{equation}
 \p_t \beta(f)+ v\cdot\nabla_x\beta(f)+ \dive_v\Bigl(E(t,x)\beta(f)\Bigr)=0,
\end{equation}
in $\mathcal{D}'((0,T)\times\IR^N_x\times\IR^N_v)$.
\end{defin}
We next introduce the notion of Lagrangian solutions.
\begin{defin}\label{lagrange}
Let be given a vector field $b(t,x,v)=(v,E(t,x))$ as in \eqref{bvf} for some $\rho\in L^\infty((0,T);L^1(\irn))$, $\rho\geq 0$,
and $\rho_b\in L^1(\IR^N)$. We assume that $E\in L^\infty((0,T);L^2(\irn))$, and that \eqref{eq:dtEdivj} holds
with $\curr\in  L^\infty((0,T);L^1(\irn))$.
We assume furthermore that either $N=1$ or $2$, or $N=3$ and $\omega=+1$, $\rho_b\in L^p(\IR^3)$ for some $p>3/2$.
We consider regular Lagrangian flows $Z$ as in Definition \ref{flow}, except that now $s\in[0,T]$
instead of $s\in[t,T]$ (forward-backward flow), and with compression constant $L$ independent of $t\in[0,T]$.
Then according to Subsections \ref{localintegr}, \ref{spatialreg}, Proposition \ref{estsuper2},
Proposition \ref{estsuper3}, the vector field $b$ satisfies assumptions {\bf(R1a), (R2a), (R3)}.
Therefore, Theorem \ref{Th estimatestab} yields the uniqueness of the forward-backward regular Lagrangian flow $Z=(X,V)$.
The whole theory of \cite{BC} then applies indeed, with very little modifications in the proofs.
In particular there is existence and uniqueness of the forward-backward regular Lagrangian flow, with compression constant $1$,
and stability. We can thus define in accordance with \cite{BC} a Lagrangian solution
$f$ to the Vlasov equation \eqref{vlasov} by
\begin{equation}
	f(t,x,v) = f^0\Bigl(X(s=0,t,x,v),V(s=0,t,x,v)\Bigr), \qquad \mbox{for all } t\in [0,T],
	\label{eq:vlasovlag}
\end{equation}
for arbitrary $f^0\in L^1(\irnxv)$.
It verifies in particular $f\in C([0,T];L^1(\irnxv))$, and it is indeed also a renormalized solution.
\end{defin}
\begin{defin}\label{lagrangeVP}
We define a Lagrangian solution to the Vlasov-Poisson system as a couple $(f,E)$ such that
\begin{enumerate}
\item  $f\in C([0,T];L^1(\irnxv))$, $f\geq 0$, $|v|^2f\in L^\infty((0,T);L^1(\irnxv))$, 
\item $E(t,x)$ is given by the convolution \eqref{field} with $\rho(t,x)=\int f(t,x,v) dv$,
$\rho_b\in L^1(\IR^N)$, $\rho_b\geq 0$ (and if $N=3$, $\omega=+1$, $\rho_b\in L^p(\IR^3)$ for some $p>3/2$),
\item $E\in L^\infty((0,T);L^2(\irn_x))$,
\item  The relation \eqref{eq:dtEdivj} holds with $\curr(t,x)=\int vf(t,x,v)dv$,
\item $f$ is a Lagrangian solution to the Vlasov equation, in the sense of \eqref{eq:vlasovlag}.
\end{enumerate}
\end{defin}

\section{Existence of Lagrangian solutions}

\subsection{Compactness}

In this subsection we prove two compactness results, Theorems \ref{cmpt} and \ref{cmptweak},
for families of Lagrangian solutions to the Vlasov-Poisson system,
with strongly or weakly convergent initial data.
\begin{lemma} \label{tauh}
Let $g(x)= \frac{x}{|x|^N}$ for $x\in\IR^N$, and denote by $\tau_h g(x)  = g(x+h)$.
Then for any $1<p<\frac{N}{N-1}$,
\begin{equation}
\|\tau_h g(x)-g(x)\|_{L^p(\irn)} \leq c |h|^\alpha,
\end{equation}
with $\alpha=1-N+N/p>0$, and where $c$ depends on $N,p$.
\end{lemma}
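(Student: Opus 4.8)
The plan is to split the difference $\tau_h g - g$ according to whether $|x|$ is large or small compared to $|h|$, since the kernel $g(x)=x/|x|^N$ is homogeneous of degree $1-N$ and its gradient is homogeneous of degree $-N$. Concretely, I would write
\[
\|\tau_h g - g\|_{L^p(\IR^N)}^p = \int_{|x|\le 2|h|}|g(x+h)-g(x)|^p\,dx + \int_{|x|>2|h|}|g(x+h)-g(x)|^p\,dx,
\]
and estimate the two pieces separately. Note first that $p<\frac{N}{N-1}$ is exactly the condition that makes $|x|^{1-N}\in L^p_{\loc}$, i.e. $(N-1)p<N$, so both integrals will be finite; and $\alpha = 1-N+N/p = \frac{N}{p}-(N-1)>0$ under the same condition.

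For the near region $|x|\le 2|h|$, I would not use any cancellation: bound $|g(x+h)-g(x)|\le |g(x+h)|+|g(x)| \le |x+h|^{1-N}+|x|^{1-N}$, and integrate. By translation invariance $\int_{|x|\le 2|h|}|x+h|^{(1-N)p}\,dx \le \int_{|y|\le 3|h|}|y|^{(1-N)p}\,dy$, and since $(1-N)p>-N$ this equals a constant times $|h|^{N+(1-N)p} = |h|^{\alpha p}$; the term with $|x|^{1-N}$ is handled identically. So the near contribution is $\le c|h|^{\alpha p}$.

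For the far region $|x|>2|h|$, I would use the mean value inequality: $|g(x+h)-g(x)|\le |h|\sup_{\theta\in[0,1]}|\nabla g(x+\theta h)|$, and since $|x+\theta h|\ge |x|-|h|\ge |x|/2$ on this region, property \eqref{c1}-type homogeneity gives $|\nabla g(y)|\le c|y|^{-N}$, hence $|g(x+h)-g(x)|\le c|h|\,|x|^{-N}$. Then $\int_{|x|>2|h|}|h|^p|x|^{-Np}\,dx = c|h|^p\,|h|^{N-Np} = c|h|^{p+N-Np} = c|h|^{\alpha p}$, where convergence of the integral at infinity uses $Np>N$, i.e. $p>1$. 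Combining the two regions and taking $p$-th roots yields $\|\tau_h g-g\|_{L^p}\le c|h|^\alpha$ with $\alpha=1-N+N/p$, as claimed. The only mildly delicate point is bookkeeping the exponents so that both the local integrability (needing $p<\frac{N}{N-1}$) and the decay at infinity (needing $p>1$) hold simultaneously, which is guaranteed precisely by the hypothesis $1<p<\frac{N}{N-1}$; there is no real analytic obstacle beyond this.
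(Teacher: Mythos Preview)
Your proof is correct and follows essentially the same approach as the paper: the same splitting at $|x|=2|h|$, the same triangle-inequality bound $|g(x+h)|+|g(x)|$ on the near region with the change of variables to $|y|\le 3|h|$, and the same mean-value estimate $|g(x+h)-g(x)|\le c|h|\,|x|^{-N}$ on the far region using $|x+\theta h|\ge |x|/2$. The exponent bookkeeping is identical to the paper's.
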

\begin{proof}
Fix $h\in\IR^N$, $h\not=0$. For $|x|>2|h|$, we have for all $0\leq\theta\leq 1$, $|x+\theta h|\geq |x|-\theta |h|>|x|/2$,
thus we have
\begin{equation}\begin{array}{l}
	\displaystyle|\tau_h g(x)-g(x)|\leq |h|\sup_{0\leq\theta\leq 1}|\nabla g(x+\theta h)|\\
	\displaystyle\hphantom{|\tau_h g(x)-g(x)|}
	\leq |h|\sup_{0\leq\theta\leq 1}\frac{c_N}{|x+\theta h|^N}\leq c_N \frac{|h|}{|x|^N}.
	\label{eq:estg}
	\end{array}
\end{equation}
Then we estimate
\begin{align*}
&  \int\limits_{ |x|>2|h|}    \frac{|h|^p}{|x|^{Np}}dx  = c_N|h|^p  \int\limits^\infty_{2|h|}r^{N-1-Np}dr
= c_N|h|^p \frac{(2|h|)^{N-Np}}{Np-N} = c_{N,p} |h|^{p-Np+N}.
\end{align*}
Next, for $|x|\leq 2|h|$, we write
\begin{equation}
 |\tau_h g(x)-g(x)|  \leq \frac{1}{|x+h|^{N-1}} + \frac{1}{|x|^{N-1}},
\end{equation}
and clearly
\begin{equation}\begin{array}{l}
	\displaystyle \int\limits_{|x|\leq 2|h|} \left(\frac{1}{|x+h|^{(N-1)p}}+\frac{1}{|x|^{(N-1)p}}\right)dx\\
	\displaystyle\leq 2\int\limits_{|y|\leq 3|h|} \frac{dy}{|y|^{(N-1)p}} =c_N\int\limits^{3|h|}_0  r^{N-Np+p-1} dr= c_{N,p}|h|^{N-Np+p},
	\label{eq:intxsmall}
	\end{array}
\end{equation}
since the last integral is convergent for $p<\frac{N}{N-1}$.
\end{proof}
\begin{theorem}\label{cmpt}
Let $(f_n,E_n)$ be a sequence of Lagrangian solutions to the Vlasov-Poisson system satisfying
\begin{equation}\label{convinit}
  f_n^0\to f^0 \text{ in } L^1(\irnxv),
\end{equation}
and
\begin{equation}
	\iint|v|^2f_n(t,x,v)dxdv+\int|E_n(t,x)|^2dx \leq C,\qquad
	\mbox{for all }t\in [0,T].
	\label{eq:boundenergy}
\end{equation}
Then, up to a subsequence $f_n$ converges strongly in $C([0,T];L^1(\irnxv))$ to $f$,
$E_n$ converges in $C([0,T];L^1_{loc}(\IR^N))$ to $E$, and $(f,E)$
is a Lagrangian solution to the Vlasov-Poisson system with initial datum $f^0$.
Moreover, the regular forward-backward Lagrangian flow $Z_n(s,t,x,v)$ converges to $Z(s,t,x,v)$
locally in measure in $\IR^N\times\IR^N$, uniformly in $s,t\in [0,T]$.
\end{theorem}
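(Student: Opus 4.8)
The plan is to combine the uniform energy bound \eqref{eq:boundenergy}, the singular-integral regularity of the force fields established in Section 3, and the stability statement of Theorem \ref{Th estimatestab} (together with the superlevel control of Propositions \ref{estsuper2}--\ref{estsuper3}) to extract a limit along which every object in Definition \ref{lagrangeVP} passes to the limit. First I would note that \eqref{eq:boundenergy} gives, uniformly in $n$ and $t$, a bound on $\|\rho_n(t,\cdot)\|_{L^1}$ (by conservation of mass along the Lagrangian flow, the mass equals $\|f_n^0\|_{L^1}$, which converges hence is bounded), on $\iint|v|^2f_n$, hence on $\|\curr_n(t,\cdot)\|_{L^1}$ since $|v|\le 1+|v|^2$, and on $\|E_n(t,\cdot)\|_{L^2}$. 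By \eqref{field}, $E_n = \frac{\omega}{|S^{N-1}|}\,g*(\rho_n-\rho_b)$ with $g(x)=x/|x|^N$; Lemma \ref{tauh} then controls the $x$-translates of $E_n$ in $L^\infty_t(L^p_{\loc})$ for $1<p<N/(N-1)$ uniformly in $n$, while \eqref{conservationofmass} controls the $t$-translates (the time derivative $\partial_t E_n$ being a fixed singular integral of the equibounded current $\curr_n$, cf.\ \eqref{eq:dtEdivj}); a Riesz--Fr\'echet--Kolmogorov / Aubin--Lions argument then yields, up to a subsequence, $E_n\to E$ in $C([0,T];L^1_{\loc}(\IR^N))$, and then the convolution structure passes the same strong limit to $\nabla(-\Delta)^{-1}\rho_b$-shifted quantities, identifying $\partial_t E$ via \eqref{eq:dtEdivj} with a singular integral of some limit current.

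Second, I would pass to the limit on the flows. Each $b_n=(v,E_n)$ satisfies {\bf(R1a)}, {\bf(R2a)}, {\bf(R3)} with constants controlled \emph{uniformly in} $n$: {\bf(R2a)} because $D_xE_n$ is the singular integral $\sum_k K_{ij}(\rho_n-\rho_b)$ of the equi-integrable (indeed equibounded in $L^1$, hence, after the standard truncation as in the proof of Theorem \ref{Th estimatestab}, equi-integrable) family $\rho_n-\rho_b$; {\bf(R3)} by Subsection \ref{localintegr}; and {\bf(R1a)} by Proposition \ref{estsuper2} (if $N=1,2$) or Proposition \ref{estsuper3} (if $N=3$, $\omega=+1$), whose hypotheses hold with the uniform bounds just listed and whose conclusion $g(r,\lambda)$ is uniform in $n$. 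Since the compression constant is $1$ for all $n$, Theorem \ref{Th estimatestab} applied to $b_n$ and $b_m$ shows that $\{Z_n\}$ is Cauchy in the topology of local convergence in measure on $\IR^N\times\IR^N$ uniformly in $s,t\in[0,T]$: given $\gamma,r,\eta$ one picks $\lambda$ from the theorem, and $\|b_n-b_m\|_{L^1((0,T)\times B_\lambda)}=\|E_n-E_m\|_{L^1((0,T)\times B_\lambda)}\to0$ from the strong $E_n$-convergence above. Hence $Z_n\to Z$ locally in measure, uniformly in $s,t$, and a further stability/consistency argument (again Theorem \ref{Th estimatestab}, now comparing $Z_n$ with the flow of the limit field $b=(v,E)$, which itself satisfies {\bf(R1a),(R2a),(R3)} by the same three inputs, using $\rho\in L^\infty_t L^1$) identifies $Z$ as \emph{the} regular Lagrangian flow of $b$.

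Third, I would upgrade to strong convergence of $f_n$ and close the fixed-point loop. By \eqref{eq:vlasovlag}, $f_n(t,x,v)=f_n^0(Z_n(0,t,x,v))$; writing $f_n(t)-f(t)=\big(f_n^0-f^0\big)\circ Z_n(0,t,\cdot)+\big(f^0\circ Z_n(0,t,\cdot)-f^0\circ Z(0,t,\cdot)\big)$, the first term is bounded in $L^1$ by $L\|f_n^0-f^0\|_{L^1}\to0$ using the compressibility constant $L=1$, and the second tends to $0$ in $L^1$ by the convergence in measure of $Z_n$ combined with the standard density argument (approximate $f^0$ by a bounded compactly supported continuous function, for which composition with a measure-convergent, measure-preserving-up-to-$L$ sequence converges in $L^1$), uniformly in $t$; this gives $f_n\to f$ in $C([0,T];L^1(\irnxv))$ with $f(t,x,v)=f^0(Z(0,t,x,v))$. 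Then $\rho_n\to\rho$, $\curr_n\to\curr$ in $C([0,T];L^1(\IR^N))$, and passing \eqref{field} and \eqref{eq:dtEdivj} to the limit shows $E$ is the field generated by $\rho=\int f\,dv$; lower semicontinuity of the energy under these convergences preserves $|v|^2f\in L^\infty_tL^1$ and $E\in L^\infty_tL^2$; and \eqref{eq:vlasovlag} for the limit flow makes $f$ a Lagrangian solution to the Vlasov equation. Hence $(f,E)$ is a Lagrangian solution to Vlasov--Poisson with datum $f^0$, as in Definition \ref{lagrangeVP}. The main obstacle is the uniform-in-$n$ verification of {\bf(R1a)} in three dimensions: one must check that the constant $A$ in Proposition \ref{estsuper3}, hence the decay rate of $\La^6(B_r\setminus G_\lambda^n)$, depends only on the quantities ($\|E_n\|_{L^\infty_tL^2}$, $\|\curr_n\|_{L^\infty_tL^1}$, $\|(-\Delta)^{-1}\rho_b\|_{L^\infty}$, $L=1$, $T$, $r$) that are bounded along the sequence — which is exactly what the statement of that proposition asserts — and similarly that the equi-integrability needed in Theorem \ref{Th estimatestab} survives, i.e.\ that $\{\rho_n-\rho_b\}$ is equi-integrable in $L^1$, which follows because $\{\rho_n\}$ inherits equi-integrability in $x$ from the $|v|^2$-moment bound (a de la Vall\'ee Poussin / velocity-averaging type estimate) together with tightness from the energy bound.
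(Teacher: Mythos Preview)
Your overall architecture matches the paper's proof closely: compactness of $E_n$ via Lemma~\ref{tauh} plus Aubin's lemma, then stability of the flows via Theorem~\ref{Th estimatestab} and the uniform superlevel bounds of Propositions~\ref{estsuper2}--\ref{estsuper3}, then strong convergence of $f_n$ by composing the converging initial data with the converging flows. Your explicit splitting $f_n(t)-f(t)=(f_n^0-f^0)\circ Z_n+ f^0\circ Z_n-f^0\circ Z$ is exactly the content of \cite[Proposition~7.3]{BC} that the paper invokes.

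There is, however, a genuine gap in your equi-integrability argument. You assert that $\{\rho_n\}$ is equi-integrable in $x$ ``from the $|v|^2$-moment bound (a de la Vall\'ee Poussin / velocity-averaging type estimate)''. This is false: take $f_n(x,v)=n^N\phi(nx)\psi(v)$ with smooth compactly supported $\phi,\psi\ge 0$; then $\iint|v|^2f_n\,dx\,dv$ is bounded but $\rho_n(x)=n^N\phi(nx)\int\psi\,dv$ concentrates to a Dirac, so $\{\rho_n\}$ is not equi-integrable. A second-moment bound in $v$ gives tightness in $v$, not local equi-integrability in $x$; and velocity-averaging lemmas require $f\in L^p$ with $p>1$, which you do not have here. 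Your parenthetical ``equibounded in $L^1$, hence, after the standard truncation\ldots, equi-integrable'' is likewise incorrect: the decomposition $g=g^1+g^2$ with $\|g^1\|_{L^1}\le\varepsilon$ and $g^2\in L^2$ exists for each fixed $g\in L^1$, but making $\varepsilon$ small \emph{uniformly} over a sequence is exactly the definition of equi-integrability, not a consequence of a mere $L^1$ bound.

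The paper obtains equi-integrability of $f_n(t,\cdot)$, and hence of $\rho_n(t,\cdot)$ and $\curr_n(t,\cdot)$, by a different route that you should adopt as your first step: use the Lagrangian representation $f_n(t,\cdot)=f_n^0\circ Z_n(0,t,\cdot)$, the fact that $f_n^0\to f^0$ strongly in $L^1$ (hence $\{f_n^0\}$ is equi-integrable), the measure-preservation of $Z_n$ (compression constant $1$), and the uniform superlevel control~\eqref{r1a} to get tightness. This equi-integrability is what feeds into the uniform-in-$n$ constants of Theorem~\ref{Th estimatestab}; without it your Cauchy argument for $\{Z_n\}$ does not close. Note also that the paper first obtains $E_n\to E$ only in $L^1_{\loc}([0,T]\times\IR^N)$ from Aubin's lemma; the upgrade to $C([0,T];L^1_{\loc})$ comes at the very end, once $\rho_n\to\rho$ in $C([0,T];L^1)$ has been established, via the convolution formula~\eqref{field}.
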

\begin{proof}
Step 1.) (Equi-integrability)
\\
Because of \eqref{eq:vlasovlag} and \eqref{convinit} we have
\begin{equation}
	\|f_n(t,\cdot,\cdot)\|_{L^1(\IR^N\times\IR^N)}= \|f_n^0\|_{L^1(\IR^N\times\IR^N)}\leq M.
	\label{eq:boundmass}
\end{equation}
Then because of the bounds \eqref{eq:boundmass}, \eqref{eq:boundenergy},
and applying Propositions \ref{estsuper2}, \ref{estsuper3}, one has for any $r>0$
\begin{equation}
	\La^{2N}\{(x,v)\in B_r: \sup_{0\leq s\leq T}|Z_n(s,t,x,v)|>\gamma\}\rightarrow 0,
	\mbox{ as }\gamma\rightarrow\infty,\mbox{uniformly in }t,n.
	\label{eq:unifsuper}
\end{equation}
Since the sequence $f_n^0$ is uniformly equi-integrable, and since $Z_n$ is measure-preserving,
we have by \eqref{eq:vlasovlag} and \eqref{eq:unifsuper} that $f_n(t,\cdot)$ is equi-integrable,
uniformly in $t,n$. Consequently, $\rho_n(t,\cdot)$ is also equi-integrable, uniformly in $t,n$.
Using the bound \eqref{eq:boundenergy}, $vf_n(t,\cdot)$ is also equi-integrable,
and therefore $\curr_n(t,\cdot)$ is also equi-integrable, uniformly in $t,n$.

Step 2.) (Compactness of the field)
\\
In order to prove that $E_n(t,x)\to E(t,x)\textrm{ in } L^1_{\loc}((0,T)\times\irn_x)$,
we first look at the compactness in $x$. 
Denote by $\tau_hE_n(t,x) = E_n(t, x+h)$. Then using \eqref{field},
\begin{equation}\begin{array}{l}
 \displaystyle\hphantom{\leq} ||\tau_h E_n(t,\cdot)-E_n(t,\cdot)||_{\lprn} \\
\displaystyle\leq c \left\|  \tau_h \frac{x}{|x|^N}-\frac{x}{|x|^N} \right\|_{\lprn}  ||\rho_n(t,\cdot)-\rho_b(\cdot)||_{\lrn}.
\end{array}
\end{equation}
Thus according to Lemma \ref{tauh}, we get for any $1<p<\frac{N}{N-1}$ that 
\begin{equation}
||E_n(t,x+h)-E_n(t,x)||_{\lprn}\to 0, \qquad\mbox{as } h\to 0,\mbox{ uniformly in }t,n.
\label{eq:compactE}
\end{equation}
Then, because of \eqref{eq:dtEdivj}, we have that $\partial_t E_n$ is bounded in
$L^\infty((0,T);\IS'(\irn))$. Applying Aubin's lemma, we conclude that
$E_n$ is compact in $L^1_{\loc}([0,T]\times\irn_x)$.
Thus after extraction of a subsequence, $E_n(t,x)\to E(t,x)$ strongly in $L^1((0,T);L^1_{\loc}(\irn_x))$.  

Step 3.) (Convergence of the flow)
\\
Because of the bound \eqref{eq:boundenergy}, one has $E\in L^\infty((0,T),L^2(\IR^N))$.
Also, using the uniform bounds on $\rho_n$, $\curr_n$ in $L^\infty((0,T);L^1(\IR^N))$ and
the uniform equi-integrability obtained in Step 1, one has up to a subsequence $\rho_n\rightarrow \rho$, $\curr_n\rightarrow \curr$
in the sense of distributions, with $\rho,\curr\in L^\infty((0,T);L^1(\IR^N))$.
We can pass to the limit in \eqref{field} and \eqref{eq:dtEdivj}.
Therefore, $b=(v,E)$ satisfies the assumptions {\bf(R1a), (R2a), (R3)} and Definition \ref{lagrange} applies.
According to \cite[Lemma 6.3]{BC}, since \eqref{eq:unifsuper} holds,
and $\rho_n$ are equi-integrable, we deduce the convergence of $Z_n$ to $Z$
locally in measure in $\IR^N_x\times\IR^N_v$, uniformly with respect to $s,t\in[0,T]$,
where $Z$ is the regular forward-backward Lagrangian flow associated to $b$.

Step 4.) (Convergence of $f$)
\\
Using the convergence \eqref{convinit}, we can apply \cite[Proposition 7.3]{BC},
and we conclude that $f_n\rightarrow f$ in $C([0,T];L^1(\IR^N_x\times\IR^N_v))$,
where $f$ is the Lagrangian solution to the Vlasov equation with coefficient $b$ and initial datum $f^0$.
It follows that $\rho_n\to \rho=\int fdv$ in $C([0,T];L^1(\irn_x))$.
By lower semi-continuity, we get from \eqref{eq:boundenergy} that
\begin{equation}
	\iint|v|^2f(t,x,v)dxdv+\int|E(t,x)|^2dx \leq C,\qquad
	\mbox{for all }t\in [0,T].
	\label{eq:boundenergyfinal}
\end{equation}
The bound \eqref{eq:boundenergy} gives also that $\curr_n\rightarrow J=\int vfdv$
in $C([0,T];L^1(\irn_x))$.
Therefore, $(f,E)$ is a Lagrangian solution to the Vlasov-Poisson system.
Using \eqref{field}, we get that $E_n\rightarrow E$ in $C([0,T];L^1_{loc}(\IR^N))$,
which concludes the proof.
\end{proof}
\begin{theorem}\label{cmptweak}
Let $(f_n,E_n)$ be a sequence of Lagrangian solutions to the Vlasov-Poisson system satisfying
\begin{equation}\label{weakconvinit}
  f_n^0\rightharpoonup f^0 \text{ weakly in } L^1(\irnxv),
\end{equation}
and the bound \eqref{eq:boundenergy}.
Then, up to a subsequence $f_n$ converges in $C([0,T];\mathrm{weak}-L^1(\irnxv))$ to $f$,
$E_n$ converges in $C([0,T];L^1_{loc}(\IR^N))$ to $E$, and $(f,E)$
is a Lagrangian solution to the Vlasov-Poisson system with initial datum $f^0$.
Moreover, the regular forward-backward Lagrangian flow $Z_n(s,t,x,v)$ converges to $Z(s,t,x,v)$
locally in measure in $\IR^N\times\IR^N$, uniformly in $s,t\in [0,T]$.
\end{theorem}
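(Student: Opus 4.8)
The plan is to retrace the proof of Theorem~\ref{cmpt}, the only role of the strong convergence \eqref{convinit} in its Steps~1--3 being through the uniform bound and the uniform equi-integrability of $\{f_n^0\}$ in $L^1(\irnxv)$; under \eqref{weakconvinit} both persist, by the Dunford--Pettis theorem, which moreover yields that $\{f_n^0\}$ is tight at infinity. Hence Steps~1--3 go through unchanged: from \eqref{eq:vlasovlag}, the measure-preservation of $Z_n$ and the uniform superlevel bound \eqref{eq:unifsuper} (Propositions~\ref{estsuper2} and \ref{estsuper3}) one gets that $f_n(t,\cdot)$, $\rho_n(t,\cdot)$ and $\curr_n(t,\cdot)$ are equi-integrable and tight uniformly in $t,n$; Lemma~\ref{tauh}, \eqref{eq:dtEdivj} and Aubin's lemma give strong compactness of $E_n$ in $C([0,T];L^1_{\loc}(\irn))$; and, up to a subsequence, $\rho_n\rightharpoonup\rho$, $\curr_n\rightharpoonup\curr$ in $L^\infty((0,T);L^1(\irn))$, with $E$ given by \eqref{field} and satisfying \eqref{eq:dtEdivj}, so that $b=(v,E)$ satisfies {\bf (R1a), (R2a), (R3)} and, by \cite[Lemma~6.3]{BC}, $Z_n\to Z$ locally in measure uniformly in $s,t\in[0,T]$, $Z$ being the forward--backward flow of $b$.

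The substitute for Step~4 is to identify the weak limit of $f_n$. Using \eqref{eq:vlasovlag} we write $f_n(t,z)=f_n^0(Z_n(0,t,z))$ and split $f_n^0=f^0+h_n$, with $h_n\rightharpoonup 0$ weakly in $L^1(\irnxv)$ and $\{h_n\}$ equi-integrable. For the fixed part, approximating $f^0$ by continuous compactly supported functions and using the convergence in measure of $Z_n(0,t,\cdot)$, its measure-preservation and \eqref{eq:unifsuper}, one obtains $f^0(Z_n(0,t,\cdot))\to f^0(Z(0,t,\cdot))=:f(t,\cdot)$ strongly in $L^1(\irnxv)$. For the remainder, testing against $\varphi\in C_c(\irnxv)$ and changing variables through the measure-preserving inverse flow $Z_n(t,0,\cdot)$ gives $\int\varphi(z)\,h_n(Z_n(0,t,z))\,dz=\int\varphi(Z_n(t,0,w))\,h_n(w)\,dw\to 0$, since $\varphi(Z_n(t,0,\cdot))\to\varphi(Z(t,0,\cdot))$ boundedly and in measure while $h_n\rightharpoonup 0$ with $\{h_n\}$ equi-integrable. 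As $\{f_n(t,\cdot)\}$ is relatively weakly compact in $L^1$ and $C_c$ separates $L^1$, this yields $f_n(t,\cdot)\rightharpoonup f(t,\cdot)$ weakly in $L^1(\irnxv)$ for every $t$.

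To get convergence in $C([0,T];\mathrm{weak}-L^1(\irnxv))$ I would observe that $t\mapsto f_n(t,\cdot)$ is equicontinuous into $\mathrm{weak}-L^1$: $f_n$ solves the transport equation with coefficients $b_n$ uniformly bounded in $L^\infty((0,T);L^p_{\loc})$ and with equi-integrable densities, so $\partial_t f_n$ is equi-bounded in a fixed negative Sobolev space locally in $(x,v)$; since the $f_n(t,\cdot)$ range, uniformly in $t,n$, in a weakly compact, hence metrizable, subset of $L^1$, an Arzel\`a--Ascoli argument gives convergence of the subsequence in $C([0,T];\mathrm{weak}-L^1)$. From here one concludes as in Theorem~\ref{cmpt}: $\rho_n\to\rho=\int f\,dv$ and $\curr_n\to\curr=\int vf\,dv$ in $C([0,T];\mathrm{weak}-L^1(\irn_x))$ (the $|v|^2$ bound \eqref{eq:boundenergy} providing tightness in $v$), lower semicontinuity of the energy under weak convergence gives \eqref{eq:boundenergyfinal}, the relation \eqref{eq:dtEdivj} passes to the limit, the initial condition $f(0,\cdot)=f^0$ holds because $f_n(0,\cdot)=f_n^0\rightharpoonup f^0$, and $E_n\to E$ in $C([0,T];L^1_{\loc}(\irn))$ follows from \eqref{field}; thus $(f,E)$ is a Lagrangian solution to the Vlasov--Poisson system with initial datum $f^0$.

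The main obstacle is the second paragraph: pushing weak $L^1$ convergence through the nonlinear composition $f_n^0\mapsto f_n^0(Z_n(0,t,\cdot))$ when the maps converge only in measure. What makes it work is that all compression constants equal $1$ (so the composition is an $L^1$-isometry preserving equi-integrability and tightness) and that the forward--backward structure provides measure-preserving inverses $Z_n(t,0,\cdot)$, allowing one to move the test function instead of the density; these ingredients are already available in \cite{BC}, so only the additional equi-integrability and tightness bookkeeping---automatic under strong convergence of the data---has to be carried out.
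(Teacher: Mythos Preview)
Your proposal is correct and follows the same route as the paper: Steps~1--3 carry over verbatim once Dunford--Pettis supplies equi-integrability of the data, and Step~4 is replaced by a weak-convergence argument for the pushed-forward densities. The only difference is packaging: the paper dispatches Step~4 in one line by invoking \cite[Proposition~7.7]{BC}, whereas you unfold the argument behind that proposition (the split $f_n^0=f^0+h_n$, the change of variables through the measure-preserving inverse flow, and the Arzel\`a--Ascoli step). Your unpacking is essentially what that proposition does, and you correctly identify the structural reasons it works.

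One small point worth tightening: your equicontinuity argument bounds $\partial_t f_n=-\dive_{x,v}(b_n f_n)$ in a negative Sobolev space, but with only $f_n\in L^1$ and $E_n\in L^p_{\loc}$, $p<N/(N-1)$, the product $E_n f_n$ is not obviously in $L^1_{\loc}$. The clean fix is to run the argument on $\beta(f_n)$ with $\beta$ bounded (the renormalized form), so that $b_n\beta(f_n)\in L^\infty_t L^1_{\loc}$ by {\bf(R3)}; equi-integrability then lets you pass from $\beta(f_n)$ back to $f_n$. This is handled inside \cite[Proposition~7.7]{BC}, which is why the paper simply cites it.
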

\begin{proof} It is the same as that of Theorem \ref{cmpt}, except the last step 4.
Instead we apply \cite[Proposition 7.7]{BC} and conclude that
$f_n\rightharpoonup f$ in $C([0,T];\mathrm{weak}-L^1(\IR^N_x\times\IR^N_v))$,
where $f$ is the Lagrangian solution to the Vlasov equation with coefficient $b$ and initial datum $f^0$.
It follows that $\rho_n\rightharpoonup \rho=\int fdv$ in $C([0,T];\mathrm{weak}-L^1(\irn_x))$.
By lower semi-continuity, we get again from \eqref{eq:boundenergy} the
energy bound \eqref{eq:boundenergyfinal}.
The bound \eqref{eq:boundenergy} also enables to conclude that
$\curr_n\rightharpoonup J=\int vfdv$ in $C([0,T];\mathrm{weak}-L^1(\irn_x))$.
Therefore, $(f,E)$ is a Lagrangian solution to the Vlasov-Poisson system.
Using \eqref{field} and the compactness estimate \eqref{eq:compactE},
we get that $E_n\rightarrow E$ in $C([0,T];L^1_{loc}(\IR^N))$, which concludes the proof.
\end{proof}

\subsection{Existence}

We conclude this section by the existence of Lagrangian solutions to the Vlasov-Poisson system
for initial datum in $L^1$ with finite energy, in the repulsive case.
\begin{theorem}\label{existence} Let $N=1,2$ or $3$, and let $f^0\in L^1(\IR^N_x\times\IR^N_v)$, $f^0\geq 0$.
Define $\rho^0$ and $E^0$ by
\begin{equation}
 \rho^0(x) = \int f^0(x,v) dv, \qquad E^0(x)=\frac{\omega}{|S^{N-1}|}\frac{x}{|x|^N}*(\rho^0(x)-\rho_b(x)),
\label{eq:initrhoE}
\end{equation}
with $\omega=+1$ (repulsive case),
$\rho_b\in L^1(\IR^N)$, $\rho_b\geq 0$, and in the case $N=3$ $\rho_b\in L^p(\IR^3)$ for some $p>3/2$.
Assume that the initial energy is finite,
\begin{equation}\label{energy0est}
 \iint|v|^2f^0(x,v)dxdv +\int|E^0(x)|^2dx <\infty.
\end{equation}
Then there exists a Lagrangian solution $(f,E)$ to the Vlasov-Poisson system
defined for all time, having $f^0$ as initial datum, and satisfying for all $t\geq 0$
\begin{equation}\label{prop}
 \iint|v|^2f(t,x,v)dxdv +\int|E(t,x)|^2dx   \leq\iint|v|^2f^0(x,v)dxdv +\int|E^0(x)|^2dx .
\end{equation}
\end{theorem}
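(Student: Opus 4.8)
The plan is to construct the solution $(f,E)$ by regularizing the initial datum, solving the Vlasov--Poisson system for the smooth data by the classical theory, deriving uniform bounds, and then passing to the limit via the compactness result of Theorem~\ref{cmpt}. Concretely, I would first mollify: choose $f^0_n\in C^\infty_c(\IR^N_x\times\IR^N_v)$, $f^0_n\geq 0$, with $f^0_n\to f^0$ in $L^1(\irnxv)$, and arranged so that the initial kinetic and potential energies converge,
\begin{equation*}
\iint|v|^2f^0_n\,dxdv+\int|E^0_n|^2dx\;\longrightarrow\;\iint|v|^2f^0\,dxdv+\int|E^0|^2dx,
\end{equation*}
where $E^0_n$ is built from $\rho^0_n=\int f^0_n\,dv$ by \eqref{eq:initrhoE}. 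This is a routine mollification argument; one truncates $f^0$ in $(x,v)$ and convolves, using $|v|^2f^0\in L^1$ and $E^0\in L^2$ to get convergence of the energies (and one may also keep $\rho_b$ fixed, since it is already in the required spaces).

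Next, for each fixed $n$ I would invoke the classical global existence theory: in $N=3$ this is Pfaffelmoser's theorem quoted in the introduction (after noting that the background $\rho_b$, being in $L^1\cap L^p$ with $p>3/2$, only adds a fixed smooth-enough forcing that does not obstruct the classical argument), and in $N=1,2$ the analogous (easier) classical results. This produces, for each $n$, a smooth solution $(f_n,E_n)$ of Vlasov--Poisson on $[0,T]$ (any $T$), which in particular is a Lagrangian solution in the sense of Definition~\ref{lagrangeVP}: its characteristics \eqref{eq:chareq} are the regular Lagrangian flow, with compression constant $1$ by Liouville, so $\|f_n(t,\cdot)\|_{L^1}=\|f^0_n\|_{L^1}$ and \eqref{eq:vlasovlag} holds. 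The conservation of energy \eqref{conservationenergy} with $\omega=+1$ then gives, uniformly in $n$ and $t\in[0,T]$,
\begin{equation*}
\iint|v|^2f_n(t,x,v)\,dxdv+\int|E_n(t,x)|^2dx=\iint|v|^2f^0_n\,dxdv+\int|E^0_n|^2dx\leq C,
\end{equation*}
which is exactly the hypothesis \eqref{eq:boundenergy} of Theorem~\ref{cmpt}, and \eqref{convinit} holds by construction.

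Then I would apply Theorem~\ref{cmpt}: up to a subsequence, $f_n\to f$ in $C([0,T];L^1(\irnxv))$, $E_n\to E$ in $C([0,T];L^1_{loc}(\IR^N))$, and $(f,E)$ is a Lagrangian solution to Vlasov--Poisson with initial datum $f^0$. The energy inequality \eqref{prop} follows from the uniform bound above together with lower semicontinuity of $\iint|v|^2f\,dxdv$ and of $\int|E|^2dx$ under the respective (strong $L^1$, and weak-$L^2$) convergences --- this is precisely the passage already carried out to get \eqref{eq:boundenergyfinal} in Step~4 of the proof of Theorem~\ref{cmpt}, combined with the convergence of the right-hand side guaranteed by the choice of the mollification. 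Finally, to get a solution \emph{defined for all time} rather than on a fixed $[0,T]$, I would run the construction on $[0,k]$ for each $k\in\mathbb{N}$ and extract a diagonal subsequence, or equivalently note that all bounds are $T$-independent so the compactness argument and the flow are consistent across increasing time intervals; concatenating (or using uniqueness of the forward-backward flow from Theorem~\ref{Th estimatestab}) yields a global Lagrangian solution.

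The main obstacle is making sure the regularized problems genuinely fall under the classical global well-posedness theory, in particular handling the nonzero background density $\rho_b$ in $N=3$: Pfaffelmoser's theorem as stated has $\rho_b=0$, so one must either check that the presence of a fixed $\rho_b\in L^1\cap L^p$ with $p>3/2$ (hence with $(-\Delta)^{-1}\rho_b$ Lipschitz and bounded) does not break the moment/characteristics estimates --- it does not, since it contributes a time-independent bounded force field --- or, alternatively, bypass this by producing the approximating Lagrangian solutions through a different route (e.g. the vanishing-viscosity or the $L\log L$ theory of \cite{DPLVP} applied to the already-smooth $f^0_n$, whose densities lie in $L\log^+L$ automatically). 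Either way the point is purely to supply \emph{some} sequence of Lagrangian solutions with the uniform energy bound and convergent initial data; everything else is then furnished by Theorem~\ref{cmpt}.
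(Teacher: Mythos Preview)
Your proposal is correct and follows essentially the same route as the paper: regularize the initial datum, invoke classical global existence for each $n$, use energy conservation (with $\omega=+1$) to get the uniform bound \eqref{eq:boundenergy}, apply Theorem~\ref{cmpt}, and extract a diagonal subsequence over $[0,k]$ to obtain a global Lagrangian solution with the energy inequality.

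The one place where the paper proceeds slightly differently is the obstacle you flag at the end: rather than keeping $\rho_b$ fixed, the paper also approximates $\rho_b$ by smooth compactly supported $\rho_b^n\geq 0$ (with $\int(\rho_n^0-\rho_b^n)\,dx=0$ when $N=1,2$, so that $E_n^0\in L^2$), and then only needs the $\limsup$ bounds on the approximate energies, not full convergence. This puts each regularized problem squarely inside the classical theory (or, as the paper also notes, one can use a regularized Vlasov--Poisson system), and avoids having to argue about Pfaffelmoser with a nonzero background. On that point, note that your claim ``$(-\Delta)^{-1}\rho_b$ Lipschitz'' is not quite justified by $\rho_b\in L^1\cap L^p$ with $p>3/2$: this gives $U_{\rho_b}\in L^\infty$, but $\nabla U_{\rho_b}\in L^\infty$ would require $p>3$. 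Your alternative of producing the approximating solutions via the $L\log^+L$ or regularized theory is fine and matches the paper's parenthetical remark.
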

\begin{proof}
We use the classical way of getting global weak solutions to the Vlasov-Poisson system,
i.e. we approximate the initial datum $f^0$ by a sequence of smooth data $f_n^0\geq 0$ with compact support.
We approximate also $\rho_b$ by smooth $\rho_b^n\geq 0$ with compact support (with $\int(\rho^0_n-\rho_b^n)dx=0$
if $N=1,2$). It is possible to do that with the upper bounds
\begin{equation}\begin{array}{l}
	\displaystyle \mathop{\mathrm{limsup}}_{n\to\infty}\iint|v|^2f^0_n(x,v)dxdv
	\leq \iint|v|^2f^0(x,v)dxdv,\\
	\displaystyle \mathop{\mathrm{limsup}}_{n\to\infty}\int|E^0_n(x)|^2dx
	\leq \int|E^0(x)|^2dx.
	\label{mollpres}
	\end{array}
\end{equation}
Then, for each $n$, there exists a smooth classical solution $(f_n,E_n)$ with initial datum $f^0_n$,
to the Vlasov-Poisson system, defined for all time $t\geq 0$. Note that we can alternatively consider
a regularized Vlasov-Poisson system.
Since $\omega=+1$, the conservation of energy \eqref{conservationenergy} gives for all $t\geq 0$,
\begin{equation} 
  \iint|v|^2f_n(t,x,v)dxdv +\int|E_n(t,x)|^2dx   =\iint|v|^2f_n^0 (x,v)dxdv +\int|E_n^0(x)|^2dx.
\end{equation}
The couple $(f_n,E_n)$ is in particular a Lagrangian solution to the Vlasov-Poisson system, for
all intervals $[0,T]$.
We can therefore apply Theorem \ref{cmpt}. Extracting a diagonal subsequence, we get the convergence
of $(f_n,E_n)$ to $(f,E)$ as stated in Theorem \ref{cmpt}, where $(f,E)$ is a Lagrangian
solution to the Vlasov-Poisson system defined for all time, with $f^0$ as initial datum.
The bound \eqref{eq:boundenergyfinal}, together with \eqref{mollpres}, gives \eqref{prop}.
\end{proof}
Let us end with a remark on measure densities.
When considering a sequence of solutions to the Vlasov-Poisson system, the vector fields
$b_n=(v,E_n)$ have a gradient in $(x,v)$ of the form
\begin{equation}\label{eq:r2bmat}
 Db_n  =
\left( \begin{matrix} 
   D_1 b_n^1 & D_2b_n^1\\
  D_1 b_n^2 & D_2b_n^2 
\end{matrix} \right)
=
\left( \begin{matrix} 
0 &     \mathrm{Id}\\
  S (\rho_n-\rho_b) &   0
\end{matrix} \right),
\end{equation}
where the index $1$ stands for $x$, $2$ for $v$, and where
$S$ is a singular integral operator.
If we require only that $D_1b_n^2$ converges in the sense of distributions to $D_1b^2=S(\rho-\rho_b)$,
for some measure $\rho\in \mathcal{M}(\irn)$, then we are in the setting of \cite{BBC}.
If $\rho_{n}$ is uniformly bounded in $L^1((0,T);\mathcal{M}(\IR^{N}))$,
and $b_n\to b$ strongly in $L^1 ((0,T);L^1_{\loc}(\irnxv))$ with $b$ satisfying \eqref{eq:r2bmat},
we conclude that $Z_n\to Z$ strongly, where $Z$ is the regular Lagrangian flow associated to $b$.
However, we are not able to define the push forward \eqref{eq:vlasovlag} of a measure $f^0$.
This prevents from applying the context of \cite{BBC} to the Vlasov-Poisson system with
measure data.

\end{document}